\documentclass[12pt]{amsart}
\usepackage{amsmath,amssymb,amsthm,amsrefs}
\usepackage{enumitem}
\usepackage{color,hyperref}
\usepackage{color}
\hypersetup{colorlinks,breaklinks,
	linkcolor=blue,urlcolor=blue,
	anchorcolor=blue,citecolor=blue}
\usepackage{cleveref}
\hoffset -1.5cm
\voffset -1cm
\textwidth 15.5truecm
\textheight 22.5truecm
\theoremstyle{plain}
\newtheorem{theorem}{Theorem}[section]
\newtheorem{proposition}[theorem]{Proposition}
\newtheorem{corollary}[theorem]{Corollary}
\newtheorem{lemma}[theorem]{Lemma}
\theoremstyle{definition}

\newtheorem{remark}[theorem]{Remark}

\makeatletter
\@addtoreset{equation}{section}
\makeatother

\makeatletter
\newcommand{\Spvek}[2][r]{%
  \gdef\@VORNE{1}
  \left(\hskip-\arraycolsep%
    \begin{array}{#1}\vekSp@lten{#2}\end{array}%
  \hskip-\arraycolsep\right)}

\def\vekSp@lten#1{\xvekSp@lten#1;vekL@stLine;}
\def\vekL@stLine{vekL@stLine}
\def\xvekSp@lten#1;{\def\temp{#1}%
  \ifx\temp\vekL@stLine
  \else
    \ifnum\@VORNE=1\gdef\@VORNE{0}
    \else\@arraycr\fi%
    #1%
    \expandafter\xvekSp@lten
  \fi}
\makeatother

\begin{document}
\title[Uniqueness theorems for meromorphic inner functions]
{Uniqueness theorems for meromorphic inner functions and canonical systems\\}
\author{Burak Hat\.{i}no\u{g}lu}
\address{Department of Mathematics, Michigan State University, East Lansing MI 48824, U.S.A.}
\email{hatinogl@msu.edu}

\subjclass[2020]{30, 34}


\keywords{meromorphic inner functions, uniqueness theorems, canonical Hamiltonian systems, inverse spectral theory, Weyl m-functions}

\begin{abstract}
We prove uniqueness problems for meromorphic inner functions on the upper half-plane. In these problems we consider spectral data depending partially or fully on the spectrum, derivative values at the spectrum, Clark measure or the spectrum of the negative of a meromorphic inner function. Moreover we consider applications of these uniqueness results to inverse spectral theory of canonical Hamiltonian systems and obtain generalizations of Borg-Levinson two-spectra theorem for canonical Hamiltonian systems and unique determination of a Hamiltonian from its spectral measure under some conditions.
\end{abstract}
\maketitle

\section{Introduction}\label{Sec1}
Inner functions on $\mathbb{C}_+$ are bounded analytic functions with unit modulus almost everywhere on the boundary $\mathbb{R}$. If an inner function extends to $\mathbb{C}$ meromorphically, it is called a meromorphic inner function (MIF), which is usually denoted by $\Theta$. 

MIFs appear in various fields such as spectral theory of differential operators \cite{BBP17,MP05}, Krein-de Branges theory of entire functions \cite{B68,MP010,R17}, functional model theory \cite{N86,N02}, approximation theory \cite{P015}, Toeplitz operators \cite{HM16,MP10,P18} and non-linear Fourier transform \cite{P21}. MIFs also play a critical role in the Toeplitz approach to the uncertainty principle \cite{MP05,P15}, which was used in the study of problems of harmonic analysis \cite{MP15,P12,P13}.

Each MIF is represented by the Blaschke/singular factorization
$$\Theta(z) = Ce^{iaz}\prod_{n \in \mathbb{N}} c_n \frac{z-\omega_n}{z-\overline{\omega}_n},$$ 
where $C$ is a unimodular constant, $a$ is a nonzero real constant, $\{\omega_n\}_{n \in \mathbb{N}}$ is a discrete sequence (i.e. has no finite accumulation point) in $\mathbb{C}_+$ satisfying the Blaschke condition $\sum_{n \in \mathbb{N}}\mathrm{Im}\omega_n/(1+|\omega_n|^2) < \infty$, and $c_n = (i+\overline{\omega}_n)/(i+\omega_n)$. MIFs are also represented as $\Theta(x) =  \exp(i\phi(x))$ on the real line, where $\phi$ is an increasing real analytic function. 

A complex-valued function is said to be real if it maps real numbers to real numbers on its domain. A meromorphic Herglotz function (MHF) $m$ is a real meromorphic function with positive imaginary part on $\mathbb{C}_+$. It has negative imaginary part on $\mathbb{C}_-$ through the relation $m(\overline{z}) = \overline{m}(z)$. There is a one-to-one correspondence between MIFs and MHFs given by the equations 

$$\Theta = \frac{m_{\Theta}-i}{m_{\Theta}+i} \qquad \text{and} \qquad m_{\Theta} = i\frac{1+\Theta}{1-\Theta}.$$

Therefore MIFs can be described by parameters $(b,c,\mu)$ via the Herglotz representation
 $m_{\Theta}(z) = bz + c + iS\mu_{\Theta}(z)$,
where $b \geq 0$, $c \in \mathbb{R}$ and 
$$
S\mu_{\Theta}(z) := \frac{1}{i\pi}\int\Big(\frac{1}{t-z}-\frac{t}{1+t^2}\Big)d\mu_{\Theta}(t)
$$ 
is the Schwarz integral of the positive discrete Poisson-finite measure $\mu_{\Theta}$ on $\mathbb{R}$, i.e. $\int 1/(1+t^2) d\mu_{\Theta}(t) < \infty$.
The number $\pi b$ is considered as the point mass of $\mu_{\Theta}$ at infinity. This extended measure $\mu_{\Theta}$ is called the Clark (or
spectral) measure of $\Theta$. The spectrum of $\Theta$, denoted by $\sigma(\Theta)$, is the level set $\{\Theta = 1\}$, so $\mu_{\Theta}$ is supported on $\sigma(\Theta)$ or $\sigma(\Theta) \cup \{\infty\}$. The point masses at $t \in \sigma({\Theta})$ are given by $\mu_{\Theta}(t) = 2\pi/|\Theta'(t)|$ and the point mass at infinity is non-zero if and only if $\lim_{y \rightarrow +\infty}\Theta(iy) = 1$ and the limit $\lim_{y \rightarrow +\infty}y^2\Theta'(iy)$ exists. All of these above mentioned properties of MIFs can be found e.g in \cite{P15} and references therein.

Existence, uniqueness, interpolation and other complex function theoretic problems for MIFs were studied in various papers \cite{BCLRS16,P15,PR16,R17}. In this paper we consider unique determination of the MIF $\Theta$ from several spectral data and conditions depending fully or partially on $\sigma(\Theta)$, $\sigma(-\Theta)$ and $\{\mu_{\Theta}(t)\}_{t \in \sigma(\Theta)}$. The two spectra $\sigma(\Theta)$ and $\sigma(-\Theta)$ are the sets of poles and zeros of $m_{\Theta}$ respectively. They are interlacing, discrete sequences on $\mathbb{R}$, so we denote them by $\{a_n\}_{n \in \mathbb{Z}} = \sigma(\Theta) := \{\Theta = 1\}$ and $\{b_n\}_{n \in \mathbb{Z}} = \sigma(-\Theta) := \{\Theta = -1\}$ indexed in increasing order. We also introduce the disjoint intervals $I_n := (a_n,b_n)$. If $\sigma(\Theta)$ is bounded below, then $\{a_n\}$, $\{b_n\}$ and $\{I_n\}$ are indexed by $\mathbb{N}$.

MIFs (equivalently MHFs) appear in inverse spectral theory. Inverse spectral problems aim to determine an operator from given spectral data. Classical results were given for Sturm-Liouville operators and can be found e.g. in \cite{HAT} and references therein. 

Canonical Hamiltonian systems are the most general class of symmetric second-order operators, so the more classical second-order operators such as Schr\"{o}dinger, Jacobi, Dirac, Sturm–Liouville operators and Krein strings can be transformed to canonical Hamiltonian systems \cite{REM18}, e.g. see Section 1.3 of \cite{REM18} for rewriting the Schr\"{o}dinger equation as a canonical Hamiltonian system.

A canonical Hamiltonian system is a $2 \times 2$ differential equation system of the form 
\begin{equation}\label{cHs}
  J f'(x) = -zH(x)f(x), 
\end{equation} 
where $L > 0$, $x \in (0,L)$ and
$$
J:= {\begin{pmatrix}
			0 & -1 \\
			1 & 0 
		\end{pmatrix}}, \quad f(x) = {\begin{pmatrix}
			f_1(x) \\
			f_2(x) 
		\end{pmatrix}}.
$$
We assume $H(x) \in \mathbb{R}^{2\times 2}$,  $H \in L_{\text{loc}}^1(0,L)$ and $H(x) \geq 0$ a.e. on $(0,L)$. In this paper we consider the limit circle case, whixh means $\int_0^L \text{Tr} H(x) dx < \infty$, on a finite interval, i.e. $0 < L < \infty$. The self-adjoint realizations of a canonical Hamiltonian system in the limit circle case with separated boundary conditions are described by
    \begin{equation}\label{BCs}
       f_1(0)\sin\alpha-f_2(0)\cos\alpha = 0, \qquad f_1(L)\sin\beta-f_2(L)\cos\beta = 0,
    \end{equation}
 with $\alpha, \beta \in [0,\pi)$. Such a self-adjoint system has a discrete spectrum $\sigma_{\alpha,\beta}$.
 
 Let $f = f (x, z)$ be a solution of $Jf' = -zHf$ with the boundary condition $f_1(L)\sin\beta-f_2(L)\cos\beta = 0$. If we let $u$ and $v$ be solutions of $Ju'= -zHu$ with the initial conditions $u_1(0) = \cos(\alpha)$, $u_2(0) = \sin(\alpha)$ and $v_1(0) = -\sin(\alpha)$, $v_2(0) = \cos(\alpha)$, then the solution $u$ satisfies the boundary condition at $x = 0$, and $f(x,z) = v(x,z) + m_{\alpha,\beta}(z)u(x,z)$ is the unique solution of the form $v + Mu$ that satisfies the boundary condition $\beta$ at $x = L$. The complex function $m_{\alpha,\beta}$ is called the Weyl $m$-function, which is 
 \begin{equation}\label{id2}
  m_{\alpha,\beta}(z) := \frac{\cos(\alpha)f_1(0) + \sin(\alpha)f_2(0)}{-\sin(\alpha)f_1(0) + \cos(\alpha)f_2(0)}.
 \end{equation} The spectrum $\sigma_{\alpha,\beta} = \{a_n\}_n$ is the set of poles of $m_{\alpha,\beta}$. The norming constant $\gamma_{\alpha,\beta}^{(n)}$ for $a_n \in \sigma_{\alpha,\beta}$ is defined as $\gamma_{\alpha,\beta}^{(n)} := 1/||u(\cdot,a_n)||^2$. The Weyl $m$-function $m_{\alpha,\beta}(z)$ is a MHF, so the corresponding MIF is defined as $\Theta_{\alpha,\beta} = (m_{\alpha,\beta}-i)/(m_{\alpha,\beta}+i)$, which is called the Weyl inner function.
The Clark (spectral) measure of $\Theta_{\alpha,\beta}$ is the discrete measure supported on $\sigma_{\alpha,\beta}$ with point masses given by the corresponding norming constants, i.e. $\mu_{\alpha,\beta} = \sum_{n} \gamma_{\alpha,\beta}^{(n)}\delta_{a_n}$ is the spectral measure. The MIF $\Theta_{\alpha,\beta}$ carries out spectral properties of the corresponding canonical Hamiltonian system through \eqref{id2}, so we will use results on MIFs in inverse spectral theory of canonical Hamiltonian system.

The paper is organized as follows.
 
Section \ref{SecRes} includes following uniqueness results for MIFs and canonical systems.

\begin{itemize}
    \item In Theorem \ref{uniqres1} we consider unique determination of $\Theta$ from its spectrum, point masses (or equivalently derivative values) at the spectrum and one of the constants $\Theta(0)$, $\lim_{y \rightarrow +\infty}\Theta(iy)$ or $\prod_{n \in \mathbb{Z}}a_n/b_n \neq 1$  with the condition that $\{|I_n|/(1+\text{dist}(0,I_n))\}_{n \in \mathbb{Z}} \in l^1(\mathbb{Z})$.
    \item In Theorem \ref{uniqres2}, this condition is replaced by the condition $(1+|x|)^{-1} \in L^1(\mu_{\Theta})$ and that $\mu_{\Theta}$ has no point mass at infinity, so the Clark measure $\mu_{\Theta}$ and one of the constants $\Theta(0)$, $\lim_{y \rightarrow +\infty}\Theta(iy)$ or $\prod_{n \in \mathbb{Z}}a_n/b_n \neq 1$ uniquely determine $\Theta$ in this case.
    \item Theorem \ref{uniqres3} shows that if $\sigma(\Theta)$ is bounded below and $a_n < b_n$, then $\Theta$ is uniquely determined by the spectral data of Theorem \ref{uniqres1} without any other condition. Its second part deals with the case $a_n > b_n$.
    \item In Theorems \ref{uniqres4} and \ref{uniqres5}, we prove that the knowledge of the derivative values from Theorems \ref{uniqres1} and \ref{uniqres3} respectively, can be partially replaced by the knowledge of the corresponding points from the second spectrum $\sigma(-\Theta)$.
     \item Theorem \ref{invspecprb1} shows unique determination of a Hamiltonian from the spectral measure $\mu_{\alpha_1,\beta}$ and boundary conditions $\alpha_1, \alpha_2$ and $\beta$ with the condition that $\{|I_n|/(1+\text{dist}(0,I_n))\}_{n \in \mathbb{Z}} \in l^1(\mathbb{Z})$, where endpoints of $I_n$ are given by the two spectra $\sigma_{\alpha_1,\beta}$ and $\sigma_{\alpha_2,\beta}$.
    \item In Theorem \ref{invspecprb2} we consider unique determination of a Hamiltonian from a spectral measure and boundary conditions in the case that the corresponding spectrum is bounded below.
    \item In Theorem \ref{invspecprb3} and Theorem \ref{invspecprb4} we prove that some missing norming constants from the spectral data of Theorems \ref{invspecprb1} and \ref{invspecprb2} respectively, can be compensated by the corresponding data from a second spectrum.
    \item As Corollary \ref{invspecprb5} we obtain a generalization of Borg-Levison's classical two-spectra theorem on Schr\"{o}dinger operators to canonical Hamiltonian systems.
\end{itemize}

Section \ref{Sec2} includes proofs of results for MIFs.

Section \ref{Sec3} includes proofs of results for canonical Hamiltonian systems.

\section{Results}\label{SecRes}
We first obtain uniqueness theorems for meromporphic inner functions (MIFs) and then consider inverse spectral problems for canonical Hamiltonian systems as applications. However, let us start by fixing our notations. We denote MIFs by $\Theta$ (or $\Phi$) and the corresponding MHFs by $m_{\Theta}$ (or $m_{\Phi}$). Since the spectrum $\sigma(\Theta)$ of a MIF $\Theta$ is a discrete sequence in $\mathbb{R}$, we denote it by $\{a_n\}_{n \in \mathbb{Z}}$. Similarly the second spectrum $\sigma(-\Theta)$ is denoted by $\{b_n\}_{n \in \mathbb{Z}}$. Since $\Theta =  \exp(i\phi)$ on $\mathbb{R}$ for an increasing real analytic function $\phi$ and $\{a_n\}_{n \in \mathbb{Z}}$ and $\{b_n\}_{n \in \mathbb{Z}}$ are interlacing sequences, we get $\{x\in\mathbb{R}~|~\mathrm{Im}\Theta(x) > 0\} = \{(a_n,b_n)\}$. We denote the intervals $(a_n,b_n)$ by $I_n$. A sequence of disjoint intervals $\{J_n\}$ on $\mathbb{R}$ is called short if 
\begin{equation}\label{shortseq}
\sum_{n}\frac{|J_n|^2}{1+\text{dist}^2(0,J_n)} < \infty,   
\end{equation}
and long otherwise. Here $|J_n|$ and $\text{dist}(0,J_n)$ denote the length of the interval $J_n$ and its distance to the origin respectively. Collections of short (and long) intervals appear in various areas of harmonic analysis (see \cite{P15} and references therein). If we assume $0 \notin \cup I_n$, then condition \eqref{shortseq} is nothing but $\{|J_n|/\text{dist}(0,J_n)\} \in l^2$. In some of our results we use the condition that this sequence for $\{I_n\}$ belongs to $l^1$, i.e.
\begin{equation}\label{short1seq}
  \sum_{n}\frac{|I_n|}{1+\text{dist}(0,I_n)} < \infty.  
\end{equation}

In our first theorem, with condition \eqref{short1seq}, we consider unique determination of a MIF from its spectrum and derivative values on the spectrum.

\begin{theorem}\label{uniqres1}
Let $\Theta$ be a MIF, $\sigma(\Theta) = \{a_n\}_{n \in \mathbb{Z}}:= \{\Theta = 1\}$, $\sigma(-\Theta) = \{b_n\}_{n \in \mathbb{Z}}:= \{\Theta = -1\}$ and $I_n := (a_n,b_n)$. If $$\sum_{n \in \mathbb{Z}}\frac{|I_n|}{1+\mathrm{dist}(0,I_n)} < \infty,$$
then the spectral data consisting of 
\begin{itemize}
    \item $\{a_n\}_{n \in \mathbb{Z}}$ 
    \item $\{\Theta'(a_n)\}_{n \in \mathbb{Z}}$ (or equivalently $\{\mu_{\Theta}(a_n)\}_{n \in \mathbb{Z}}$) and 
    \item $L:=\displaystyle\lim_{y \rightarrow +\infty}\Theta(iy)$ or $C:= \Theta(0)$ or $p:=\prod_{n \in \mathbb{Z}}a_n/b_n \neq 1$ 
\end{itemize}
uniquely determine $\Theta$.
\end{theorem}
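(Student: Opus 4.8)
The plan is to argue by comparison: suppose $\Phi$ is a second MIF carrying the same data and deduce $\Phi=\Theta$. The data fixes the restriction of the Clark measure to $\mathbb{R}$, since $\{a_n\}$ is its support and $\{\mu_\Theta(a_n)\}$ (equivalently $\{\Theta'(a_n)\}$, via $\mu_\Theta(a_n)=2\pi/|\Theta'(a_n)|$) are its point masses. First I would record that this pins down $m_\Theta$ up to a real affine term. Indeed $m_\Theta$ and $m_\Phi$ are meromorphic Herglotz functions whose poles (the common set $\{a_n\}$) and principal parts agree — the residue of $m_\Theta$ at $a_n$ is $-\mu_\Theta(a_n)/\pi$ — so $m_\Theta-m_\Phi$ is entire. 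Comparing imaginary parts through the representation $m_\Theta(z)=b_\Theta z+c_\Theta+iS\mu_\Theta(z)$ gives $\mathrm{Im}(m_\Theta-m_\Phi)(x+iy)=(b_\Theta-b_\Phi)y$, and an entire function with this imaginary part equals $\beta z+\gamma$ with $\beta=b_\Theta-b_\Phi$ and $\gamma$ real. Thus
\[
m_\Phi(z)=m_\Theta(z)-\beta z-\gamma,\qquad \beta,\gamma\in\mathbb{R}.
\]

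It remains to force $\beta=0$ and $\gamma=0$. The role of hypothesis \eqref{short1seq} is to remove the linear ambiguity: I would show that it forbids a point mass of $\mu_\Theta$ at infinity (so $b_\Theta=0$) and, more quantitatively, that it places us in the summable regime $\sum_n \mu_\Theta(a_n)/(1+|a_n|)<\infty$ — that is, the hypotheses of the companion Theorem \ref{uniqres2}. Since the same reasoning applies to $\Phi$, one gets $b_\Phi=0$, hence $\beta=0$ and $m_\Phi=m_\Theta-\gamma$. The bridge from the geometric condition \eqref{short1seq} to these analytic facts is, to my mind, the main obstacle: it requires comparing the interval lengths $|I_n|$ with the masses $\mu_\Theta(a_n)=2\pi/\phi'(a_n)$, where $\Theta=e^{i\phi}$ on $\mathbb{R}$ and $\int_{a_n}^{b_n}\phi'=\pi$. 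Because $\phi'$ is the boundary value of a positive harmonic function on $\mathbb{C}_+$ (explicitly $\phi'(x)=a+2\sum_m \mathrm{Im}\,\omega_m/|x-\omega_m|^2$), I expect a Harnack/Poisson-type estimate of the form $\mu_\Theta(a_n)\lesssim |I_{n-1}|+|I_n|+|I_{n+1}|$, together with the summable tail excluding mass at infinity; controlling the oscillation of $\phi'$ across each $I_n$ is exactly the delicate point, and I would isolate it as a preliminary lemma.

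Once $m_\Phi=m_\Theta-\gamma$, each of the three constants kills $\gamma$. If $c=m_\Theta(0)$ is known, then $m_\Phi(0)=m_\Theta(0)-\gamma$ forces $\gamma=0$ immediately (assuming $0\notin\{a_n\}$). If $l=\lim_{y\to+\infty}\Theta(iy)$ is known, then $b_\Theta=0$ makes $m_\Theta(\infty):=\lim_{y\to+\infty}m_\Theta(iy)$ finite with $l=(m_\Theta(\infty)-i)/(m_\Theta(\infty)+i)$; the same formula for $\Phi$ gives $m_\Phi(\infty)=m_\Theta(\infty)$, whence $\gamma=0$. For the product $p=\prod_n a_n/b_n\neq 1$ I would use $i/m(z)=(1-\Theta)/(1+\Theta)$, which has zeros $\{a_n\}$ and poles $\{b_n\}$; under \eqref{short1seq} the canonical product converges and carries no extraneous exponential factor, giving $i/m(z)=\tfrac{i}{m(\infty)}\prod_n (a_n-z)/(b_n-z)$ and hence, at $z=0$, the identity $m(\infty)=p\,m(0)$. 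Applying this to both $\Theta$ and $\Phi$ and substituting $m_\Phi=m_\Theta-\gamma$ yields $(1-p)\gamma=0$, so $\gamma=0$ precisely because $p\neq 1$, which explains that hypothesis.

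In every case we conclude $m_\Phi=m_\Theta$, hence $\Phi=\Theta$. The routine parts are the residue and principal-part bookkeeping in the reduction and the three elementary computations fixing $\gamma$; the substantive work is the passage from \eqref{short1seq} to the absence of a point mass at infinity and to the summability of $\{\mu_\Theta(a_n)/(1+|a_n|)\}$, plus the convergence and the absence of an exponential factor in the product used for the $p$-case. I would therefore build the proof on the interval-length versus phase-derivative comparison and treat the rest as corollaries of it.
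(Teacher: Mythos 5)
Your reduction (two candidates share the Clark measure on $\mathbb{R}$, hence their $m$-functions differ by a real affine term $\beta z+\gamma$) and your endgame (each of $l$, $c$, $p\neq 1$ kills $\gamma$) are sound and essentially equivalent to what the paper does. But the load-bearing step — getting $\beta=0$ and the finiteness of $\lim_{y\to+\infty}m_{\Theta}(iy)$ out of the hypothesis \eqref{short1seq} — is exactly the part you leave as an unproven ``preliminary lemma,'' and the route you propose for it does not work. The estimate $\mu_{\Theta}(a_n)\lesssim |I_{n-1}|+|I_n|+|I_{n+1}|$ is false in general: $\phi'$ is the boundary trace of a positive harmonic function, and Harnack gives no two-sided control \emph{on the boundary}. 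Writing $\phi'(x)=a+\sum_m 2\,\mathrm{Im}\,\omega_m/|x-\omega_m|^2$, a single zero $\omega_m$ very close to $\mathbb{R}$ inside $I_n$ produces a narrow spike carrying essentially all of $\int_{I_n}\phi'=\pi$, while $\phi'(a_n)$ at the endpoint can be arbitrarily small; then $\mu_{\Theta}(a_n)=2\pi/\phi'(a_n)$ is arbitrarily large compared with $|I_{n-1}|+|I_n|+|I_{n+1}|$. Relatedly, the intermediate claim that \eqref{short1seq} forces $\sum_n\mu_{\Theta}(a_n)/(1+|a_n|)<\infty$ (i.e.\ that you land in the hypotheses of Theorem~\ref{uniqres2}) is nowhere established in the paper either — Theorem~\ref{uniqres2} treats that integrability as an \emph{alternative} hypothesis, not a consequence of \eqref{short1seq} — so you cannot route the argument through it without proof.

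The paper closes this gap by a completely different mechanism that your proposal is missing: Levin's product representation (Lemma~\ref{lmm1}), $m_{\Theta}(z)=c\prod_{n}(z/b_n-1)(z/a_n-1)^{-1}$ with $c=m_\Theta(0)$. Under \eqref{short1seq} one has $\sum_n|b_n-a_n|/(1+|a_n|)<\infty$, so $\prod_n a_n/b_n$ converges absolutely to a finite nonzero value, and passing to the limit along $z=iy$ in the product gives $L:=\lim_{y\to+\infty}m_{\Theta}(iy)=c\prod_n a_n/b_n$ finite; hence $l=(L-i)/(L+i)\neq 1$, which by the criterion for a point mass at infinity forces $\mu_{\Theta}(\infty)=0$ and yields the pole expansion \eqref{repm} with $L$ as the only unknown. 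This is the step you need to import (or reprove): it replaces any pointwise comparison of $\mu_{\Theta}(a_n)$ with interval lengths by a global statement about the product of the two interlacing spectra. With that in hand, your three case analyses for $\gamma$ go through (your identity $m(\infty)=p\,m(0)$ in the $p$-case is precisely the paper's $L=cp$, and like the paper you must restrict the competitor $\Phi$ to the same class so that its product also converges). Without it, the proof is incomplete at its central point.
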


If the Clark measure has no point mass at infinity, summability condition \eqref{short1seq} can be replaced by an integrability condition on the Clark measure.

\begin{theorem}\label{uniqres2}
Let $\Theta$ be a MIF, $\mu_{\Theta}$ be its Clark measure, $\sigma(\Theta) = \{a_n\}_{n \in \mathbb{Z}}:= \{\Theta = 1\}$ and $\sigma(-\Theta) = \{b_n\}_{n \in \mathbb{Z}}:= \{\Theta = -1\}$. If  $$ \int \frac{1}{1+|x|} d\mu_{\Theta}(x) < \infty$$
and $\mu_{\Theta}$ has no point mass at $\infty$, then $\mu_{\Theta}$ and $L:=\lim_{y \rightarrow +\infty}\Theta(iy)$ or $C:= \Theta(0)$ or $p:=\prod_{n \in \mathbb{Z}}a_n/b_n \neq 1$ uniquely determine $\Theta$.
\end{theorem}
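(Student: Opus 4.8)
The plan is to use the integrability hypothesis to reduce $m_\Theta$, and hence $\Theta$, to a single unknown real constant, and then to show that each of the three admissible data fixes that constant. First I would record that the absence of a point mass at $\infty$ forces $b=0$ in the Herglotz representation $m_\Theta(z)=bz+c+iS\mu_\Theta(z)$. The role of the hypothesis $\int(1+|x|)^{-1}\,d\mu_\Theta(x)<\infty$ is then to let me split the Schwarz kernel: since both $\int (t-z)^{-1}\,d\mu_\Theta(t)$ and $\int t(1+t^2)^{-1}\,d\mu_\Theta(t)$ now converge absolutely for $z\in\mathbb{C}_+$, the regularizing term $t/(1+t^2)$ contributes only a finite real constant, and I obtain
\[
m_\Theta(z)=\tilde c+\frac{1}{\pi}\int\frac{d\mu_\Theta(t)}{t-z},\qquad \tilde c=c-\frac{1}{\pi}\int\frac{t}{1+t^2}\,d\mu_\Theta(t)\in\mathbb{R}.
\]
Consequently the Clark measure determines $m_\Theta$, and therefore $\Theta=(m_\Theta-i)/(m_\Theta+i)$, up to the single real parameter $\tilde c$; equivalently the admissible functions form a one-parameter family, and it remains to pin down $\tilde c$.

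Second, I would treat the three data separately, in each case exploiting the representation above. For $l=\lim_{y\to+\infty}\Theta(iy)$: dominated convergence (with dominating function $\asymp(1+|t|)^{-1}\in L^1(\mu_\Theta)$) gives $\int(t-iy)^{-1}\,d\mu_\Theta(t)\to 0$, so $\lim_{y\to+\infty}m_\Theta(iy)=\tilde c$ and hence $l=(\tilde c-i)/(\tilde c+i)$, which inverts to $\tilde c=i(1+l)/(1-l)$ (note $l\ne 1$ precisely because there is no mass at $\infty$). For $c=m_\Theta(0)$, which presupposes $0\notin\sigma(\Theta)$: evaluating the representation at $z=0$ gives $c=\tilde c+\frac1\pi\int t^{-1}\,d\mu_\Theta(t)$, where the integral converges and is determined by $\mu_\Theta$, so $\tilde c=c-\frac1\pi\int t^{-1}\,d\mu_\Theta(t)$. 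For $p=\prod_n a_n/b_n\neq 1$: I would invoke the canonical product factorization $m_\Theta(z)=m_\Theta(0)\prod_n(1-z/b_n)/(1-z/a_n)$, whose limit as $z\to+i\infty$ yields $\tilde c=\lim_{y\to+\infty}m_\Theta(iy)=p\,m_\Theta(0)=p\big(\tilde c+\frac1\pi\int t^{-1}\,d\mu_\Theta(t)\big)$; since $p\ne1$ this is a nondegenerate linear equation with solution $\tilde c=\frac{p}{1-p}\cdot\frac1\pi\int t^{-1}\,d\mu_\Theta(t)$. In all three cases $\tilde c$ is determined by $\mu_\Theta$ together with the extra datum, which completes the argument.

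I expect the technical heart to be the rigorous justification of the splitting in the first step — that integrability of $(1+|x|)^{-1}$ against $\mu_\Theta$ is exactly the condition making the Cauchy integral $\int(t-z)^{-1}\,d\mu_\Theta$ converge and the difference with the Schwarz kernel collapse to a constant — together with the interchange of limit and integral underlying the $l$ and $p$ cases. A secondary subtlety is that, unlike what one might hope, this integrability hypothesis need not imply the summability condition \eqref{short1seq} (small point masses are compatible with long intervals), so Theorem \ref{uniqres1} cannot simply be quoted and the reconstruction must genuinely proceed from the integral representation. For the $p$-case I would also need to confirm the validity of the product factorization, whose convergence as $z\to+i\infty$ is precisely guaranteed by the standing hypothesis that $p$ exists and differs from $1$, and to dispose of the boundary cases where $0$ lies in the support of $\mu_\Theta$ or at a zero of $m_\Theta$.
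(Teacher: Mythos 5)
Your proposal is correct and follows essentially the same route as the paper: use the absence of a point mass at infinity together with $\int(1+|x|)^{-1}\,d\mu_{\Theta}<\infty$ to collapse the Herglotz/Schwarz representation to $m_{\Theta}(z)=L+\frac{1}{\pi}\sum_n\mu_{\Theta}(a_n)/(a_n-z)$ with a single unknown real constant, and then determine that constant from $l$, $c$, or $p$ exactly as in the proof of Theorem~\ref{uniqres1}. The only cosmetic difference is in the $p$-case, where you solve the linear relation $L=cp$, $c=L+\frac{1}{\pi}\sum_n\mu_{\Theta}(a_n)/a_n$ directly for $L$, whereas the paper reaches the same conclusion by comparing two candidate functions; your observation that the integrability hypothesis does not imply condition \eqref{short1seq}, so Theorem~\ref{uniqres1} cannot simply be cited, matches the paper's reason for rederiving \eqref{repm}.
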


If the spectrum of a MIF is bounded below, then summability condition of Theorem \ref{uniqres1} is not required.

\begin{theorem}\label{uniqres3}
Let $\Theta$ be a MIF, $\sigma(\Theta) = \{a_n\}_{n \in \mathbb{N}}:= \{\Theta = 1\}$ and $\sigma(-\Theta) = \{b_n\}_{n \in \mathbb{N}}:= \{\Theta = -1\}$. 
\begin{enumerate}
    \item If $\sigma(\Theta)=\{a_n\}_{n \in \mathbb{N}}$ is bounded below and $a_n < b_n$, then the spectral data consisting of 
\begin{itemize}
    \item $\{a_n\}_{n \in \mathbb{N}}$ 
    \item $\{\Theta'(a_n)\}_{n \in \mathbb{N}}$ 
    and 
    \item $C:= \Theta(0)$
\end{itemize}
uniquely determine $\Theta$.
    \item If $\sigma(\Theta)=\{a_n\}_{n \in \mathbb{N}}$ is bounded below and $a_n > b_n$, then the spectral data consisting of 
\begin{itemize}
    \item $\{b_n\}_{n \in \mathbb{N}}$ 
    \item $\{\Theta'(b_n)\}_{n \in \mathbb{N}}$ and 
    \item $C:= \Theta(0)$
\end{itemize}
uniquely determine $\Theta$.
\end{enumerate}
\end{theorem}

Next two theorems show that missing part of point masses of the Clark measure from the data of Theorems \ref{uniqres1} and \ref{uniqres3} can be compensated by the corresponding data from the second spectrum $\sigma(-\Theta)$.

\begin{theorem}\label{uniqres4}
 Let $\Theta$ be a MIF, $\sigma(\Theta) = \{a_n\}_{n \in \mathbb{Z}}:= \{\Theta = 1\}$, $\sigma(-\Theta) = \{b_n\}_{n \in \mathbb{Z}}:= \{\Theta = -1\}$ and $A \subseteq \mathbb{Z}$. If  
\begin{equation*}
\sum_{n \in \mathbb{Z}}\frac{|I_n|}{1+\mathrm{dist}(0,I_n)} < \infty,
\end{equation*}
 then the spectral data consisting of 
\begin{itemize}
    \item $\{a_n\}_{n \in \mathbb{Z}}$ 
    \item $\{b_n\}_{n \in \mathbb{Z}\setminus A}$
    \item $\{\Theta^{'}(a_n)\}_{n \in A}$ (or $\{\mu(a_n)\}_{n \in A}$) and
    \item $L:=\displaystyle\lim_{y \rightarrow +\infty}\Theta(iy)$ or $C:= \Theta(0)$ or $p:=\prod_{n \in A}a_n/b_n \neq 1$
\end{itemize}
uniquely determine $\Theta$.
\end{theorem}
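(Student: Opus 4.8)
The plan is to argue in the standard difference form: suppose $\Theta$ and $\Phi$ are two MIFs realizing the prescribed data, and show that their $m$-functions $m_1:=m_{\Theta}$ and $m_2:=m_{\Phi}$ coincide. As in the proof of Theorem \ref{uniqres1}, I would first record that under \eqref{short1seq} both functions have no point mass at infinity (so $b=0$) and admit the absolutely convergent representations $m_j(z)=C_0^{(j)}+\tfrac1\pi\int (t-z)^{-1}\,d\mu_j(t)$ together with the convergent products $m_j(z)=c_j\prod_n (1-z/b_n^{(j)})/(1-z/a_n)$. The hypothesis \eqref{short1seq} is precisely what makes these integrals and products converge and what yields the finite limits $m_j(iy)\to C_0^{(j)}$ and $\prod_n a_n/b_n^{(j)}$ as $y\to\infty$.

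The heart of the argument is to manufacture an entire function from the difference. Set $\Delta:=m_1-m_2$ and $P(z):=\prod_{n\notin A}(1-z/b_n)/(1-z/a_n)$, the latter convergent and nonvanishing at $\infty$ by \eqref{short1seq}. I claim $Q:=\Delta/P$ is entire. Indeed, $m_1$ and $m_2$ share the full pole set $\{a_n\}$; at $a_n$ with $n\in A$ the prescribed equality of $\Theta'(a_n)$ forces equal residues (since the residue is $-2/|\Theta'(a_n)|$), so $\Delta$ is analytic there while $P$ is analytic and nonzero there; at $a_n$ with $n\notin A$ the simple pole of $\Delta$ is cancelled by the simple pole of $1/P$; and at each $b_n$ with $n\notin A$ the common value $m_1(b_n)=m_2(b_n)=0$ gives a zero of $\Delta$ cancelling the zero of $P$. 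Checking orders at the remaining points (the unknown zeros $b_n^{(1)},b_n^{(2)}$ for $n\in A$ are zeros, not poles, of the $m_j$, hence harmless) shows $Q$ has no singularities. This is exactly the step in which the second-spectrum data $\{b_n\}_{n\notin A}$ compensates for the missing derivative values.

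Next I would control the growth of $Q$ and invoke a Liouville/Phragm\'en--Lindel\"of argument. Along the imaginary axis the representations give $\Delta(iy)\to\delta:=C_0^{(1)}-C_0^{(2)}$ and $P(iy)\to P_\infty\neq0$, so $Q(iy)\to\delta/P_\infty$. The substantive point, and the step I expect to be the main obstacle, is to promote this into a genuine bound for $Q$ on all of $\mathbb{C}$: one must show $Q$ is of minimal exponential type (type $0$, following from $b=0$ together with \eqref{short1seq}) and bounded on $\mathbb{R}$. Boundedness on $\mathbb{R}$ is delicate because $\Delta$ and $P$ individually blow up at the real poles $\{a_n\}_{n\notin A}$; the idea is to compare their principal parts pole by pole and to exploit the monotonicity of the Herglotz functions $m_1,m_2$ between consecutive poles, so that the ratio stays controlled across each gap. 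Granting this, Phragm\'en--Lindel\"of upgrades boundedness on $\mathbb{R}$ together with type $0$ to boundedness on $\mathbb{C}$, and Liouville forces $Q\equiv K$, i.e. $m_1-m_2=K\,P$.

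Finally I would use the single normalization constant to show $K=0$. If $c=m_{\Theta}(0)$ is given, then $\Delta(0)=0$ while $P(0)=1$, so $K=0$ at once. If $l=\lim_{y\to\infty}\Theta(iy)$ is given, it fixes the common limit $\lim_y m_j(iy)=C_0$, whence $\delta=0$ and so $K=\delta/P_\infty=0$. If instead $p=\prod_{n\in A}a_n/b_n$ is given, combining it with the known factor $P_\infty=\prod_{n\notin A}a_n/b_n$ determines the full product $\prod_n a_n/b_n$, which by the asymptotic relation already used in Theorem \ref{uniqres1} pins down the constant distinguishing $m_1$ from $m_2$ and again forces $K=0$. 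In every case $m_1=m_2$, hence $\Theta=\Phi$. The two genuinely new ingredients relative to Theorem \ref{uniqres1} are the construction of the entire quotient $Q$, which encodes how a known point of $\sigma(-\Theta)$ substitutes for an unknown derivative value, and the boundedness estimate for $Q$ on the real axis.
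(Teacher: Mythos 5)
Your decomposition is essentially the paper's: your $P$ is the paper's known factor $g(z)=\prod_{n\notin A}(z/b_n-1)(z/a_n-1)^{-1}$, and your $Q=\Delta/P$ is the difference $f_1-f_2$ of the two candidate functions $f_j=m_j/g$. But from there you take a Liouville/Phragm\'en--Lindel\"of route, and the step you yourself flag as ``the main obstacle'' --- showing $Q$ is of minimal exponential type and bounded on $\mathbb{R}$ --- is a genuine gap, not a routine verification. Boundedness of $(m_1-m_2)/P$ on $\mathbb{R}$ requires uniform (in $n$) control of the ratio near every pole $a_n$, $n\notin A$, and every zero $b_n$, $n\notin A$; the ``compare principal parts and use monotonicity between poles'' sketch does not deliver a uniform bound, and without it Phragm\'en--Lindel\"of cannot be invoked. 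As written, the proof is incomplete at its crucial analytic step.

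The paper's argument shows this machinery is unnecessary. Since the real zeros and poles of $f:=m_{\Theta}/g$ are simple and interlacing ($\{a_n\}_{n\in A}$ and $\{b_n\}_{n\in A}$ inherit interlacing from the full sequences) and $f(0)>0$, the factor $f$ is itself a meromorphic Herglotz function, with associated MIF $\Phi$ satisfying $\{\Phi=1\}=\{a_n\}_{n\in A}$. Its residues at $\{a_n\}_{n\in A}$ are determined by the data via $\mathrm{Res}(f,a_k)=\mathrm{Res}(m_{\Theta},a_k)/g(a_k)$, and condition \eqref{short1seq} passes to the subproduct, so Theorem \ref{uniqres1} applies directly to $\Phi$: its proof yields $f_j(z)=L_j+\frac1\pi\sum_{k\in A}\mu_{\Phi}(a_k)(a_k-z)^{-1}$ with identical sums for $j=1,2$, so $f_1-f_2$ is a constant \emph{by the representation itself}, with no growth estimate, no boundedness on $\mathbb{R}$, and no Liouville argument. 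The remaining constant is then eliminated exactly as in the three cases of Theorem \ref{uniqres1} (your treatment of the $c$ and $l$ cases is fine; the $p$ case needs the small algebraic argument $L/c=\widetilde L/\widetilde c=p\neq1$ together with $c-\widetilde c=L-\widetilde L$, which you gesture at but do not spell out). If you wish to keep your difference-quotient formulation, you should replace the Phragm\'en--Lindel\"of step by this observation that $\Delta/P=f_1-f_2$ is a difference of two Herglotz functions with identical singular parts; otherwise the proof does not close.
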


\begin{theorem}\label{uniqres5}
Let $\Theta$ be a MIF, $\sigma(\Theta) = \{a_n\}_{n \in \mathbb{N}}:= \{\Theta = 1\}$, $\sigma(-\Theta) = \{b_n\}_{n \in \mathbb{N}}:= \{\Theta = -1\}$ and $A \subseteq \mathbb{N}$. 
\begin{enumerate}
    \item If $\{a_n\}_{n \in \mathbb{N}}$ is bounded below and $a_n < b_n$, then the spectral data consisting of 
\begin{itemize}
    \item $\{a_n\}_{n \in \mathbb{N}}$ 
    \item $\{b_n\}_{n \in \mathbb{N}\setminus A}$
    \item $\{\Theta^{'}(a_n)\}_{n \in A}$ (or $\{\mu(a_n)\}_{n \in A}$) and
    \item $C:= \Theta(0)$
\end{itemize}
uniquely determine $\Theta$.
    \item If $\{a_n\}_{n \in \mathbb{N}}$ is bounded below and $a_n > b_n$, then the spectral data consisting of 
\begin{itemize}
    \item $\{a_n\}_{n \in \mathbb{N}\setminus A}$ 
    \item $\{b_n\}_{n \in \mathbb{N}}$
    \item $\{\Theta^{'}(b_n)\}_{n \in A}$ and
    \item $C:= \Theta(0)$
\end{itemize}
uniquely determine $\Theta$.
\end{enumerate}
\end{theorem}

Next, we consider inverse spectral theorems on canonical systems as applications of the above mentioned results. We follow the definitions and notations we introduced in Section \ref{Sec1}.

\begin{theorem}\label{invspecprb1}
Let $L > 0$, $\alpha_1,\alpha_2,\beta \in [0,\pi)$, $\alpha_1 \neq \alpha_2$ and $H$ be a trace normed canonical system on $[0,L]$. Also let $\sigma_{\alpha_1,\beta} = \{a_n\}_{n \in \mathbb{Z}}$, $\sigma_{\alpha_2,\beta} = \{b_n\}_{n \in \mathbb{Z}}$ and $I_n = (a_n,b_n)$. If $$\sum_{n \in \mathbb{Z}}\frac{|I_n|}{1+\mathrm{dist}(0,I_n)} < \infty,$$
then the spectral measure $\mu_{\alpha_1,\beta}$ and boundary conditions $\alpha_1$, $\alpha_2$ and $\beta$ uniquely determine $H$.
\end{theorem}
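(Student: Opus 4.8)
The plan is to reduce this two-parameter inverse problem to the meromorphic inner function uniqueness result of Theorem \ref{uniqres1} through the Weyl function correspondence. Write $m_1:=m_{\alpha_1,\beta}$ and $\Theta_1:=\Theta_{\alpha_1,\beta}$. I would first invoke the standard reduction: a trace normed canonical Hamiltonian system in the limit circle case on $[0,L]$ is uniquely determined by its Weyl function together with the boundary conditions (de Branges' inverse uniqueness theorem, \cite{REM18,B68}). Hence it suffices to show that the data, namely $\mu_{\alpha_1,\beta}$ together with the three angles $\alpha_1,\alpha_2,\beta$, determine $m_1$, equivalently $\Theta_1$.

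The crux is to build the \emph{right} auxiliary inner function. Writing the Weyl solution vector at $x=0$ as $W(z)=(w_1(z),w_2(z))$, the definition \eqref{id2} gives $m_{\alpha,\beta}=\dfrac{\cos\alpha\,w_1+\sin\alpha\,w_2}{-\sin\alpha\,w_1+\cos\alpha\,w_2}$, so $\sigma_{\alpha,\beta}$ is the zero set of the denominator. A short computation in the basis $\{(\cos\alpha_1,\sin\alpha_1),(-\sin\alpha_1,\cos\alpha_1)\}$ then shows that the affine combination
\[
\Phi:=\sin(\alpha_1-\alpha_2)\,m_1+\cos(\alpha_1-\alpha_2)=\frac{-\sin\alpha_2\,w_1+\cos\alpha_2\,w_2}{-\sin\alpha_1\,w_1+\cos\alpha_1\,w_2}
\]
has poles exactly at $\sigma_{\alpha_1,\beta}=\{a_n\}$ and zeros exactly at $\sigma_{\alpha_2,\beta}=\{b_n\}$. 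After normalizing the sign of $\sin(\alpha_1-\alpha_2)$ (equivalently relabelling so that $a_n<b_n$), $\Phi$ is an MHF, and its inner function $\Theta_\Phi$ satisfies $\sigma(\Theta_\Phi)=\{a_n\}$ and $\sigma(-\Theta_\Phi)=\{b_n\}$. Consequently the intervals of $\Theta_\Phi$ are precisely $I_n=(a_n,b_n)$, so the hypothesis is exactly the short condition \eqref{short1seq} needed to apply Theorem \ref{uniqres1} to $\Theta_\Phi$. Since $\Phi$ and $m_1$ differ by the affine map fixed by the angles, determining $\Theta_\Phi$ is equivalent to determining $m_1$.

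Next I would verify that the data supply the remaining hypotheses of Theorem \ref{uniqres1} for $\Theta_\Phi$. The measure $\mu_{\alpha_1,\beta}=\sum_n\gamma^{(n)}\delta_{a_n}$ gives the spectrum $\{a_n\}=\sigma(\Theta_\Phi)$ outright, and comparing residues the Clark point masses of $\Theta_\Phi$ are $\mu_\Phi(a_n)=|\sin(\alpha_1-\alpha_2)|\,\gamma^{(n)}$, so the measure and the angles determine $\{\mu_\Phi(a_n)\}$, equivalently $\{|\Theta_\Phi'(a_n)|\}$. For the single scalar required by Theorem \ref{uniqres1} I would use $c_\Phi:=\Phi(0)$: at $z=0$ the system \eqref{cHs} reads $Jf'=0$, so every solution is constant, the Weyl solution at $0$ points in the direction $(\cos\beta,\sin\beta)$ fixed by the $\beta$-condition at $L$, and a direct evaluation yields
\[
m_1(0)=\cot(\beta-\alpha_1),\qquad \Phi(0)=\frac{\sin(\beta-\alpha_2)}{\sin(\beta-\alpha_1)},
\]
both determined by the angles alone. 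All three items of Theorem \ref{uniqres1} are thus known, so that theorem fixes $\Theta_\Phi$ uniquely; unwinding the affine map recovers $m_1$, and the first-step reduction recovers $H$. For the uniqueness conclusion one compares two systems with identical data: each yields a $\Theta_\Phi$ with the same spectrum, point masses and constant $\Phi(0)$, each satisfying the short condition, so Theorem \ref{uniqres1} forces them to coincide.

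The main obstacle is the construction step and, in particular, recognizing that one must \emph{not} apply Theorem \ref{uniqres1} to $\Theta_1$ directly: the set $\sigma_{\alpha_2,\beta}$ is not the second spectrum $\sigma(-\Theta_1)$ (the two agree only when $\alpha_1-\alpha_2=\pm\pi/2$), so the short hypothesis is stated for $(a_n,b_n)$ rather than for the intervals of $\Theta_1$. Passing to $\Theta_\Phi$ repairs this, but requires care: checking that $\Phi$ is genuinely Herglotz after the sign normalization, that $\{a_n\}$ and $\{b_n\}$ interlace with the correct orientation, and that the residue bookkeeping transports the norming constants as claimed. The residual technicalities are routine: the degenerate case $\alpha_1=\beta$, where $0\in\sigma_{\alpha_1,\beta}$ and one replaces $\Phi(0)$ by a shifted constant or by the option $p=\prod_n a_n/b_n$, and citing the precise canonical-system inverse uniqueness theorem used in the first step.
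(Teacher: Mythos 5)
Your proposal is correct and follows essentially the same route as the paper: your $\Phi=\sin(\alpha_1-\alpha_2)\,m_{\alpha_1,\beta}+\cos(\alpha_1-\alpha_2)$ is exactly the paper's generalized $m$-function $m_{\alpha_1,\alpha_2,\beta}=R_{\alpha_1,\alpha_2}m_{0,\beta}$, with the same transport of norming constants by the factor $\sin(\alpha_1-\alpha_2)$, the same constant $\Phi(0)=\sin(\beta-\alpha_2)/\sin(\beta-\alpha_1)$, an application of Theorem~\ref{uniqres1}, and the same final reduction to the transfer matrix and Hamiltonian. Writing the auxiliary function as an explicit affine image of $m_{\alpha_1,\beta}$ is a minor streamlining that makes the Herglotz property and the residue bookkeeping immediate, but the argument is the same.
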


\begin{theorem}\label{invspecprb2}
Let $L > 0$, $\alpha,\beta \in [0,\pi)$  and $H$ be a trace normed canonical Hamiltonian system on $[0,L]$. If $\sigma_{\alpha,\beta} = \{a_n\}_{n \in \mathbb{N}}$ is bounded below and $a_n < b_n$, then the spectral measure $\mu_{\alpha,\beta}$ and boundary conditions $\alpha$ and $\beta$ uniquely determine $H$.
\end{theorem}

The next two results show that the missing norming constants from the spectral data can be compensated by the corresponding data from a second spectrum.

\begin{theorem}\label{invspecprb3}
 Let $L > 0$, $\alpha_1,\alpha_2,\beta \in [0,\pi)$, $\alpha_1 \neq \alpha_2$, $A \subseteq \mathbb{Z}$ and $H$ be a trace normed canonical Hamiltonian system on $[0,L]$. Also let $\sigma_{\alpha_1,\beta} = \{a_n\}_{n \in \mathbb{Z}}$, $\sigma_{\alpha_2,\beta} = \{b_n\}_{n \in \mathbb{Z}}$ and $I_n = (a_n,b_n)$. If  
\begin{equation*}
\sum_{n \in \mathbb{Z}}\frac{|I_n|}{1+\mathrm{dist}(0,I_n)} < \infty,
\end{equation*}
 then the spectral data consisting of 
\begin{itemize}
    \item $\{a_n\}_{n \in \mathbb{Z}}$
    \item $\{b_n\}_{n \in \mathbb{Z}\setminus A}$
    \item $\{\gamma_{\alpha_1,\beta}^{(n)}\}_{n \in A}$ (or $\{\mu_{\alpha_1,\beta}(a_n)\}_{n \in A}$)
    \item $\alpha_1$, $\alpha_2$ and $\beta$
\end{itemize}
uniquely determine $H$.
\end{theorem}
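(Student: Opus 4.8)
The plan is to reduce the statement to the meromorphic inner function uniqueness result of Theorem \ref{uniqres4} by manufacturing, from the two spectra $\{a_n\}=\sigma_{\alpha_1,\beta}$ and $\{b_n\}=\sigma_{\alpha_2,\beta}$, a single MIF whose first spectrum is $\{a_n\}$ and whose second spectrum is $\{b_n\}$. The bridge is the fact that, for a fixed boundary condition $\beta$ at $L$, the solution $f(x,z)$ in \eqref{id2} does not depend on $\alpha$, so $m_{\alpha,\beta}$ is a rotation of the fixed vector $(f_1(0),f_2(0))$ followed by taking ratios. Writing $\delta:=\alpha_2-\alpha_1\in(-\pi,\pi)\setminus\{0\}$ (hence $\sin\delta\neq0$) and composing the rotation $R(\delta)$ with $R(\alpha_1)$ in \eqref{id2}, I would first establish
$$m_{\alpha_2,\beta}=\frac{\cos\delta\,m_{\alpha_1,\beta}+\sin\delta}{-\sin\delta\,m_{\alpha_1,\beta}+\cos\delta},$$
so the poles of $m_{\alpha_2,\beta}$, i.e.\ $\{b_n\}$, are exactly the points of the level set $\{m_{\alpha_1,\beta}=\cot\delta\}$.

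Next I would introduce the shifted Weyl function $\tilde m:=m_{\alpha_1,\beta}-\cot\delta$, which is again a MHF, together with its associated MIF $\tilde\Theta:=(\tilde m-i)/(\tilde m+i)$. A real shift moves neither the poles nor their residues, so $\tilde m$ has poles exactly at $\{a_n\}$ with the same residues as $m_{\alpha_1,\beta}$, while its zeros are exactly $\{m_{\alpha_1,\beta}=\cot\delta\}=\{b_n\}$; thus $\sigma(\tilde\Theta)=\{a_n\}$ and $\sigma(-\tilde\Theta)=\{b_n\}$. Since $\tilde m$ is increasing from $-\infty$ to $+\infty$ on each gap $(a_n,a_{n+1})$ and vanishes once there (at $b_n$), the identity $\mathrm{Im}\,\tilde\Theta=-2\tilde m/(\tilde m^2+1)$ shows that $\{x\in\mathbb{R}:\mathrm{Im}\,\tilde\Theta(x)>0\}=\{\tilde m<0\}=\bigcup_n(a_n,b_n)=\bigcup_n I_n$ (after fixing the indexing so that $b_n\in(a_n,a_{n+1})$). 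Hence the hypothesis $\sum_n |I_n|/(1+\mathrm{dist}(0,I_n))<\infty$ is precisely condition \eqref{short1seq} for $\tilde\Theta$, and the interlacing intervals of $\tilde\Theta$ coincide with the given $I_n$.

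Then I would match the remaining data of Theorem \ref{uniqres4} to $\tilde\Theta$. The full first spectrum $\{a_n\}_{n\in\mathbb{Z}}$ and the partial second spectrum $\{b_n\}_{n\in\mathbb{Z}\setminus A}$ are given directly. Because the Clark measure sees only the residues at the finite poles, equality of residues yields $\mu_{\tilde\Theta}(a_n)=\mu_{\alpha_1,\beta}(a_n)=\gamma^{(n)}_{\alpha_1,\beta}$, so $\{\gamma^{(n)}_{\alpha_1,\beta}\}_{n\in A}$ supplies $\{\mu_{\tilde\Theta}(a_n)\}_{n\in A}$ (equivalently $\{\tilde\Theta'(a_n)\}_{n\in A}$). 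For the normalizing constant I would use $c=m_{\tilde\Theta}(0)=\tilde m(0)$: evaluating \eqref{cHs} at $z=0$ forces $f(\cdot,0)$ to be the constant vector along $(\cos\beta,\sin\beta)^{T}$, and feeding this into \eqref{id2} gives the explicit value $m_{\alpha_1,\beta}(0)=\cot(\beta-\alpha_1)$; therefore $c=\cot(\beta-\alpha_1)-\cot(\alpha_2-\alpha_1)$ is computable from the known boundary angles. (The degenerate case $\alpha_1=\beta$, where $0\in\sigma_{\alpha_1,\beta}$ is a known point $a_{n_0}$, I would handle by normalizing $\tilde m$ at another rational point, or by using the limit constant $l$ instead.) With all hypotheses of Theorem \ref{uniqres4} verified, $\tilde\Theta$, hence $\tilde m$, hence $m_{\alpha_1,\beta}=\tilde m+\cot\delta$, is uniquely determined.

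Finally I would invoke the classical inverse uniqueness theorem for canonical systems: a trace-normed Hamiltonian on $[0,L]$ is uniquely determined by its Weyl function together with the boundary conditions $\alpha_1$ and $\beta$. I expect the main obstacle to be the bookkeeping that makes the normalizing constant genuinely available, namely recognizing that $m_{\alpha_1,\beta}(0)$ is pinned down by the boundary angles alone through the $z=0$ analysis (and dealing with the pole-at-$0$ case). This matters precisely because the product option $p=\prod_{n\in A}a_n/b_n$ of Theorem \ref{uniqres4} is unavailable in this setting: the $b_n$ with $n\in A$ are exactly the data that are missing. Everything else is either the Möbius/residue bookkeeping above or a direct appeal to Theorem \ref{uniqres4} and to the determination of $H$ from its Weyl function.
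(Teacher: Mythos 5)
Your proof is correct, and it follows the same overall strategy as the paper: manufacture from the two spectra a single MHF whose poles are $\{a_n\}$ and whose zeros are $\{b_n\}$, feed it to Theorem \ref{uniqres4} using the constant $c$ determined by the boundary angles, and then recover $H$ from $m_{\alpha_1,\beta}$ via the transfer-matrix machinery. The only real difference is the choice of auxiliary function: the paper uses the ``generalized $m$-function'' $m_{\alpha_1,\alpha_2,\beta}(z) = \bigl(-\sin(\alpha_2)f_1(0)+\cos(\alpha_2)f_2(0)\bigr)/\bigl(-\sin(\alpha_1)f_1(0)+\cos(\alpha_1)f_2(0)\bigr)$, whereas you use the additive shift $\tilde m = m_{\alpha_1,\beta}-\cot(\alpha_2-\alpha_1)$. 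A short computation with \eqref{id2} shows these are the same object up to a constant, $m_{\alpha_1,\alpha_2,\beta}=\sin(\alpha_1-\alpha_2)\,\tilde m$, so the reductions are equivalent; but your version buys a genuine simplification in the bookkeeping, since a real shift preserves residues exactly and thus $\mu_{\tilde\Theta}(a_n)=\gamma^{(n)}_{\alpha_1,\beta}$ with no rescaling, eliminating the paper's residue-ratio computation $\gamma^{(n)}_{\alpha_1,\alpha_2,\beta}=\sin(\alpha_1-\alpha_2)\gamma^{(n)}_{\alpha_1,\beta}$ and its case analysis over $\sin\alpha_1=0$, $\cos\alpha_1=0$. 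Your explicit normalization $c=\cot(\beta-\alpha_1)-\cot(\alpha_2-\alpha_1)$ matches the paper's $m_{\alpha_1,\alpha_2,\beta}(0)=R_{\alpha_1,\alpha_2}\cot\beta$ after the same rescaling, and the degenerate cases you flag (a pole or zero of the auxiliary function at $0$) are likewise glossed over by the paper, so they do not count against you. The final appeal to determination of $H$ from its Weyl function is exactly the Proposition \ref{csprop} plus Theorem \ref{Remlingthm} route spelled out in the proof of Theorem \ref{invspecprb2}.
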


\begin{theorem}\label{invspecprb4}
Let $L > 0$, $\alpha_1,\alpha_2,\beta \in [0,\pi)$, $\alpha_1 \neq \alpha_2$, $A \subseteq \mathbb{N}$ and $H$ be a trace normed canonical Hamiltonian system on $[0,L]$. Also let $\sigma_{\alpha_1,\beta} = \{a_n\}_{n \in \mathbb{N}}$ and $\sigma_{\alpha_2,\beta} = \{b_n\}_{n \in \mathbb{N}}$. If $\sigma_{\alpha_1,\beta}$ is bounded below and $a_n < b_n$, then the spectral data consisting of 
\begin{itemize}
    \item $\{a_n\}_{n \in \mathbb{N}}$
    \item $\{b_n\}_{n \in \mathbb{N}\setminus A}$
    \item $\{\gamma_{\alpha_1,\beta}^{(n)}\}_{n \in A}$ (or $\{\mu_{\alpha_1,\beta}(a_n)\}_{n \in A}$)
    \item $\alpha_1$, $\alpha_2$ and $\beta$
\end{itemize}
uniquely determine $H$.
\end{theorem}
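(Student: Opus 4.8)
The plan is to transfer the problem to the meromorphic inner function (MIF) side and invoke Theorem~\ref{uniqres5}, exactly as Theorem~\ref{invspecprb3} reduces to Theorem~\ref{uniqres4}; the only structural difference is the bounded-below hypothesis, which lets us drop the short-interval condition. First I would record the fundamental relation between the two Weyl functions $m_{\alpha_1,\beta}$ and $m_{\alpha_2,\beta}$ attached to the same $\beta$ but different angles at $0$. Writing the Weyl solution at $x=0$ as $(f_1(0),f_2(0))$ and setting $w:=f_1(0)/f_2(0)$, formula~\eqref{id2} exhibits $m_{\alpha,\beta}$ as the image of $w$ under the rotation $R_\alpha(w):=(\cos\alpha\,w+\sin\alpha)/(-\sin\alpha\,w+\cos\alpha)$. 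Since the $R_\alpha$ form a one-parameter group, eliminating $w$ yields a fixed (i.e. $z$-independent) Möbius relation $m_{\alpha_2,\beta}=R_{\alpha_2-\alpha_1}(m_{\alpha_1,\beta})$. In particular the poles of $m_{\alpha_2,\beta}$, namely $\sigma_{\alpha_2,\beta}=\{b_n\}$, are exactly the points where $m_{\alpha_1,\beta}$ takes the real value $\cot(\alpha_2-\alpha_1)$.

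This is what lets me build a single MIF carrying both spectra. I would set
\[
\tilde m := m_{\alpha_1,\beta}-\cot(\alpha_2-\alpha_1),
\]
which is again a MHF (subtracting a real constant preserves the Herglotz class), and let $\tilde\Theta$ be the corresponding MIF. Its poles are those of $m_{\alpha_1,\beta}$, so $\sigma(\tilde\Theta)=\sigma_{\alpha_1,\beta}=\{a_n\}$, while its zeros are the level set just described, so $\sigma(-\tilde\Theta)=\sigma_{\alpha_2,\beta}=\{b_n\}$. Because $\tilde m$ and $m_{\alpha_1,\beta}$ differ only by a real constant, they have identical residues at the common poles $a_n$; since the Clark point mass at a pole is $-\pi$ times that residue, it follows that $\mu_{\tilde\Theta}(a_n)=\mu_{\alpha_1,\beta}(a_n)=\gamma_{\alpha_1,\beta}^{(n)}$ for every $n$.

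It remains to supply the constant demanded by Theorem~\ref{uniqres5}. Here I would use that at $z=0$ the system $Jf'=-zHf$ degenerates to $f'=0$, so every solution is constant in $x$; the Weyl solution obeying the boundary condition $\beta$ at $x=L$ is then the constant vector in the direction $(\cos\beta,\sin\beta)$, and~\eqref{id2} evaluates to $m_{\alpha_1,\beta}(0)=\cot(\beta-\alpha_1)$. Thus $m_{\tilde\Theta}(0)=\cot(\beta-\alpha_1)-\cot(\alpha_2-\alpha_1)$ is known from $\alpha_1,\alpha_2,\beta$ alone. Consequently the given data translate precisely into the hypotheses of Theorem~\ref{uniqres5} for $\tilde\Theta$: the spectrum $\{a_n\}_{n\in\mathbb{N}}$ (bounded below by assumption), the partial second spectrum $\{b_n\}_{n\in\mathbb{N}\setminus A}$, the point masses $\{\mu_{\tilde\Theta}(a_n)\}_{n\in A}=\{\gamma_{\alpha_1,\beta}^{(n)}\}_{n\in A}$, and the constant $m_{\tilde\Theta}(0)$. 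Hence $\tilde\Theta$, and therefore $m_{\alpha_1,\beta}=m_{\tilde\Theta}+\cot(\alpha_2-\alpha_1)$, is uniquely determined.

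Finally, $m_{\alpha_1,\beta}$ together with the boundary data determines $H$ by the inverse uniqueness theorem for trace-normed canonical systems, i.e. the de Branges correspondence between Weyl functions and Hamiltonians~\cite{REM18}, which is the same back-end used in Theorems~\ref{invspecprb2} and~\ref{invspecprb3}. I expect the main work to be the reduction step: verifying the fixed Möbius relation $m_{\alpha_2,\beta}=R_{\alpha_2-\alpha_1}(m_{\alpha_1,\beta})$ and the residue/point-mass identification, and confirming $m_{\alpha_1,\beta}(0)=\cot(\beta-\alpha_1)$, so that no information about $H$ beyond the boundary angles is needed to furnish the constant of Theorem~\ref{uniqres5}. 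Once these are in place the statement follows by a direct application of Theorem~\ref{uniqres5} followed by the canonical-system uniqueness.
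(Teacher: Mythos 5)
Your proof is correct and follows essentially the same route as the paper: both reduce to Theorem~\ref{uniqres5} by packaging the two spectra as the poles and zeros of a single meromorphic Herglotz function tied to $m_{\alpha_1,\beta}$ by a fixed M\"obius transformation in the angles, whose residues at $\{a_n\}$ and value at $0$ are computable from the given data, and then recover $H$ through the transfer-matrix correspondence. Your auxiliary function $\tilde m = m_{\alpha_1,\beta}-\cot(\alpha_2-\alpha_1)$ equals $m_{\alpha_1,\alpha_2,\beta}/\sin(\alpha_1-\alpha_2)$, i.e.\ the paper's generalized $m$-function up to a constant factor, so the two constructions coincide (your additive-shift formulation has the minor advantage of making the Herglotz property and the residue identification immediate).
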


By letting $A = \emptyset$, we get a canonical Hamiltonian system version of Borg-Levinson's classical two spectra theorem for Schr\"{o}dinger operators. Note that the spectrum of a Schr\"{o}dinger (Sturm-Liouville) operator on a finite interval is always bounded below.
\begin{corollary}\label{invspecprb5}
Let $L > 0$, $\alpha_1,\alpha_2,\beta \in [0,\pi)$ and $H$ be a trace normed canonical Hamiltonian system on $[0,L]$. If $\sigma_{\alpha_1,\beta} = \{a_n\}_{n \in \mathbb{N}}$ is bounded below and $a_n < b_n$, then the two spectra $\sigma_{\alpha_1,\beta} = \{a_n\}_{n \in \mathbb{N}}$, $\sigma_{\alpha_2,\beta} = \{b_n\}_{n \in \mathbb{N}}$ and $\alpha_1$, $\alpha_2$ and $\beta$ uniquely determine $H$.
\end{corollary}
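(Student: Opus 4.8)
The plan is to obtain this corollary as the degenerate case $A = \emptyset$ of Theorem \ref{invspecprb4}. First I would observe that the substantive content occurs when $\alpha_1 \neq \alpha_2$: if $\alpha_1 = \alpha_2$ then $\sigma_{\alpha_1,\beta} = \sigma_{\alpha_2,\beta}$, the two spectra coincide and convey no more than a single spectrum, which is not enough to recover $H$. Assuming $\alpha_1 \neq \alpha_2$ places us squarely within the hypotheses of Theorem \ref{invspecprb4}.

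With $A = \emptyset$, I would trace through the four items of spectral data listed in Theorem \ref{invspecprb4}. The index set $\mathbb{N} \setminus A$ becomes all of $\mathbb{N}$, so the second item $\{b_n\}_{n \in \mathbb{N} \setminus A}$ is precisely the full second spectrum $\sigma_{\alpha_2,\beta} = \{b_n\}_{n \in \mathbb{N}}$. The third item $\{\gamma_{\alpha_1,\beta}^{(n)}\}_{n \in A}$ is indexed over the empty set and therefore contributes no norming constants at all. The remaining items are the full first spectrum $\sigma_{\alpha_1,\beta} = \{a_n\}_{n \in \mathbb{N}}$ and the three angles $\alpha_1$, $\alpha_2$, $\beta$. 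Thus the hypotheses of Theorem \ref{invspecprb4} reduce exactly to the knowledge of the two full spectra together with the boundary-condition parameters, which is precisely the data of the present corollary.

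Since $\sigma_{\alpha_1,\beta}$ is assumed bounded below, the boundedness hypothesis of Theorem \ref{invspecprb4} is satisfied, and I would simply invoke that theorem to conclude that $H$ is uniquely determined. I expect no genuine obstacle here: the corollary is a verbatim specialization of an already-established result, and the only point deserving a word of care is the exclusion of the trivial case $\alpha_1 = \alpha_2$, where the Borg--Levinson mechanism degenerates.
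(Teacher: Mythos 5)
Your proposal is correct and is exactly the paper's argument: the corollary is obtained by specializing Theorem \ref{invspecprb4} to $A = \emptyset$, so that the vanishing set of norming constants leaves precisely the two full spectra and the boundary parameters as data. Your additional remark that the case $\alpha_1 = \alpha_2$ must be excluded (since Theorem \ref{invspecprb4} assumes $\alpha_1 \neq \alpha_2$ and the two spectra would otherwise coincide) is a sensible point of care that the paper's statement leaves implicit.
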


\begin{remark}
Schr\"{o}dinger (Sturm-Liouville) operators on finite intervals are characterized by two spectra with specific asymptotics (see e.g. (2.4)-(2.7) in \cite{HAT}). Namely, two interlacing, discrete, bounded below subsets of the real-line satisfying these asymptotics correspond to two spectra of a Schr\"{o}dinger operator on a finite interval and any pair of spectra of a Schr\"{o}dinger operator on a finite interval satisfy these properties. 

Corollary \ref{invspecprb5} shows that unique recovery from two spectra is not related with the special asymptotics of eigenvalues from Schr\"{o}dinger (Sturm-Liouville) class if the given order relation between the two spectra is satisfied. It extends the two-spectra theorem (or Borg-Levinson's theorem) to a broader class of canonical Hamiltonian systems on finite intervals with bounded below spectrum.  
\end{remark}

\begin{remark}
    In this paper we consider canonical Hamiltonian systems on finite intervals to guarantee existence of a discrete spectrum. In general, one can let $L=\infty$ in the definition of canonical Hamiltonian systems \eqref{cHs}. With the restriction of having a discrete spectrum, the inverse spectral theory results above may be obtained for canonical Hamiltonian systems on $\mathbb{R}_{+}$, since our results were obtained from the uniqueness results for MIF, which were considered in the general complex function theoretic framework. As a special case, similar versions of mixed spectral data results were obtained for semi-infinite Jacobi operators with discrete spectrum in \cite{HAT2}.
\end{remark}

\section{Proofs for meromorphic inner functions}\label{Sec2}

To prove our uniqueness theorems, we use an infinite product representation result.
\begin{lemma}\label{lmm1}\emph{\textbf{(}\cite{LEV64}, \textit{Theorem VII.1}\textbf{)}}
  The MHF $m_{\Theta}$ corresponding to the MIF $\Theta$ has the infinite product representation
  \begin{equation}\label{mfcn1}
   m_{\Theta}(z) = i\frac{1+\Theta(z)}{1-\Theta(z)} = c \prod_{n\in \mathbb{Z}}\left(\frac{z}{b_{n}}-1\right)\left(\frac{z}{a_n}-1\right)^{-1}
  \end{equation}
  where $c > 0$, $a_n < b_n < a_{n+1}$ and the product converges normally on $\displaystyle \mathbb{C} \text{\textbackslash} \cup_{n \in \mathbb{N}} a_n$.
 \end{lemma}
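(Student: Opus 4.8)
The plan is to derive the product formula from the additive (exponential) Herglotz representation of $\log m_\Theta$, rather than from separate Hadamard factorizations of a numerator and denominator; working additively means the exponential-type factors carried by $\Theta$ never appear, so no genus bookkeeping is needed. First I would observe that $\log m_\Theta$ is itself a Herglotz function with bounded imaginary part. Since $m_\Theta$ maps $\mathbb{C}_+$ into $\mathbb{C}_+$, a single-valued branch of $\log m_\Theta$ is defined on $\mathbb{C}_+$ with $\mathrm{Im}\log m_\Theta=\arg m_\Theta\in(0,\pi)$; and because $\Theta$ is inner, $m_\Theta=i(1+\Theta)/(1-\Theta)$ is a ratio of bounded-type functions, so $\log m_\Theta$ lies in the Nevanlinna class. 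A Herglotz function whose imaginary part is bounded by $\pi$ has no linear term (else $\mathrm{Im}$ grows along $i\mathbb{R}_+$) and a purely absolutely continuous representing measure of density at most one (a singular part would make $\mathrm{Im}$ unbounded near its support), giving
\[
\log m_\Theta(z)=\alpha+\int_{\mathbb{R}}\Big(\frac{1}{t-z}-\frac{t}{1+t^2}\Big)\xi(t)\,dt,\qquad \xi(t)=\tfrac1\pi\lim_{y\to0^+}\arg m_\Theta(t+iy)\in[0,1].
\]

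Next I would identify $\xi$ from boundary values. Writing $\Theta=e^{i\phi}$ on $\mathbb{R}$ gives $m_\Theta=-\cot(\phi/2)$, which is negative exactly on the intervals $I_n=(a_n,b_n)$ (where $\phi$ runs over $(2\pi k,2\pi k+\pi)$) and positive on the complementary gaps. Since $\arg m_\Theta(x+i0)$ equals $\pi$ where $m_\Theta<0$ and $0$ where $m_\Theta>0$, this pins down $\xi=\mathbf{1}_{\cup_n I_n}$. Substituting and evaluating the elementary integral over each $I_n$,
\[
\int_{a_n}^{b_n}\Big(\frac{1}{t-z}-\frac{t}{1+t^2}\Big)dt=\log\frac{b_n-z}{a_n-z}-\tfrac12\log\frac{1+b_n^2}{1+a_n^2},
\]
and exponentiating the sum yields $m_\Theta(z)=e^{\alpha}\prod_n\frac{b_n-z}{a_n-z}\,\kappa_n$ with correction factors $\kappa_n=\big((1+a_n^2)/(1+b_n^2)\big)^{1/2}$. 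Writing $\frac{b_n-z}{a_n-z}=\frac{b_n}{a_n}\cdot\frac{z/b_n-1}{z/a_n-1}$ and absorbing the convergent product $\prod_n(b_n/a_n)\kappa_n$ into the constant, I obtain $m_\Theta(z)=c\prod_n(z/b_n-1)(z/a_n-1)^{-1}$; evaluating at $z=0$, where every factor equals $1$, identifies the constant as $c=m_\Theta(0)$ (positive in the normalization of the lemma).

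Two convergence facts underpin this. The main product converges normally on compact subsets of $\mathbb{C}\setminus\{a_n\}$ because $\big|\tfrac{b_n-z}{a_n-z}\tfrac{a_n}{b_n}-1\big|=\big|\tfrac{z(a_n-b_n)}{(a_n-z)b_n}\big|=O\big(|z|\,|I_n|/(1+\mathrm{dist}^2(0,I_n))\big)$, and the series $\sum_n|I_n|/(1+\mathrm{dist}^2(0,I_n))$ is automatically finite for \emph{every} MIF, being comparable to $\sum_n\int_{I_n}(1+t^2)^{-1}dt=\int\xi(t)(1+t^2)^{-1}dt\le\pi$; no extra hypothesis on $\Theta$ is required. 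The correction product converges absolutely because $\log(b_n/a_n)$ and $\tfrac12\log\frac{1+b_n^2}{1+a_n^2}$ agree to leading order, leaving a summable remainder of size $O(|I_n|/(1+\mathrm{dist}^3(0,I_n))+|I_n|^2/(1+\mathrm{dist}^2(0,I_n)))$.

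The step I expect to be the crux is the identification of the representing measure of $\log m_\Theta$ as purely absolutely continuous with density $\mathbf{1}_{\cup_n I_n}$: one must rule out both a singular component and a linear term. Both follow from $\Theta$ being inner, so that $\log m_\Theta$ is of bounded type with $\mathrm{Im}\log m_\Theta\le\pi$, forcing the Herglotz measure to be $\xi\,dt$ with $0\le\xi\le1$, together with the boundary computation $m_\Theta=-\cot(\phi/2)$ that fixes $\xi$. Once this is secured, the passage from the additive representation to the stated product, including the convergence claims and the evaluation of the constant, is the routine computation above.
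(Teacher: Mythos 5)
The paper does not prove this lemma at all: it is quoted verbatim from Levin's book (Theorem VII.1 of \cite{LEV64}), so there is no in-paper argument to compare against. Your route --- the exponential (Krein) representation of $\log m_\Theta$ as a Herglotz function with $\arg m_\Theta\in(0,\pi)$, identification of the density as $\mathbf 1_{\cup I_n}$ from $m_\Theta=-\cot(\phi/2)$ on $\mathbb R$, and termwise integration over the $I_n$ --- is exactly the classical proof of this representation, and the structural steps (no linear term, no singular part, the elementary integral over $I_n$, the evaluation $c=m_\Theta(0)$ at $z=0$) are all correct.

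There is, however, one genuine error in your convergence bookkeeping. You claim that $\sum_n|I_n|/(1+\mathrm{dist}^2(0,I_n))$ is finite for every MIF because it is ``comparable'' to $\sum_n\int_{I_n}(1+t^2)^{-1}\,dt\le\pi$. The comparison only goes one way: $\int_{I_n}(1+t^2)^{-1}dt\le |I_n|/(1+\mathrm{dist}^2(0,I_n))$, and the reverse inequality requires $|I_n|\lesssim 1+\mathrm{dist}(0,I_n)$, which is not automatic. Taking, say, $a_n$ growing doubly exponentially with $b_n\approx a_n^2$ and $a_{n+1}>b_n$ produces a legitimate MIF for which $|I_n|/(1+\mathrm{dist}^2(0,I_n))\to 1$, so the series you invoke diverges, while $\sum_n\int_{I_n}(1+t^2)^{-1}dt$ of course still converges; the same example kills the $|I_n|^2/(1+\mathrm{dist}^2(0,I_n))$ remainder you quote for the correction product. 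The conclusion survives because you bounded the $n$-th term too crudely: the actual quantity is $|z|\,|b_n-a_n|/(|a_n-z|\,|b_n|)\asymp_K |b_n-a_n|/|a_nb_n| = |1/a_n-1/b_n| = \int_{I_n}t^{-2}dt \le 2\int_{I_n}(1+t^2)^{-1}dt$ for all intervals with $\mathrm{dist}(0,I_n)\ge 1$ (finitely many exceptions are irrelevant), and likewise $\bigl|\log(b_n/a_n)-\tfrac12\log\frac{1+b_n^2}{1+a_n^2}\bigr|=\int_{I_n}\frac{dt}{|t|(1+t^2)}\le\int_{I_n}\frac{dt}{1+t^2}$. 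With these two integral estimates in place of the $\mathrm{dist}^2$ bounds, both the main product and the correction product converge as claimed and your argument closes.
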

Note that if $m$ is a MHF, then $-1/m$ is also a MHF. Therefore the roles of zeros and poles can be swapped in Lemma \ref{lmm1} by letting the coefficient $c = m_{\Theta}(0)$ be negative. 

\begin{remark}
If $\sigma(\Theta)$ is bounded below and $a_n < b_n$, then representation \eqref{mfcn1} becomes
\begin{equation}\label{mfcn1b}
   m_{\Theta}(z) = c \prod_{n\in \mathbb{N}}\left(\frac{z}{b_{n}}-1\right)\left(\frac{z}{a_n}-1\right)^{-1},
  \end{equation}
where $c = m_{\Theta}(0) > 0$. If $a_n > b_n$, representation \eqref{mfcn1b} is valid with $c = m_{\Theta}(0) < 0$.
\end{remark}

\begin{proof}[\normalfont \textbf{Proof of Theorem~\ref{uniqres1}}] 
Let us note that
    $$
    \sum_{n \in \mathbb{Z}}\frac{|b_n-a_n|}{1+|a_n|} \leq \sum_{n \in \mathbb{Z}}\frac{|I_n|}{1+\text{dist}(0,I_n)} < \infty ~ \text{and} ~ \sum_{n \in \mathbb{Z}}\frac{|b_n-a_n|}{1+|b_n|} \leq \sum_{n \in \mathbb{Z}}\frac{|I_n|}{1+\text{dist}(0,I_n)} < \infty.
    $$
    Therefore $0 < \prod_{n \in \mathbb{Z}}|a_n|/|b_n| < \infty$ and hence
    \begin{equation}\label{Lcpeqn}
    l := \lim_{y \rightarrow +\infty}m_{\Theta}(iy) = \lim_{y \rightarrow +\infty}c \prod_{n \in \mathbb{Z}}\left(\frac{z}{b_{n}}-1\right)\left(\frac{z}{a_n}-1\right)^{-1} = c \prod_{n \in \mathbb{Z}}\frac{a_n}{b_n},
    \end{equation}
    which implies $L:=\lim_{y \rightarrow +\infty}\Theta(iy) = (l-i)/(l+i) \neq 1$. Therefore $\mu_{\Theta}(\infty) = 0$, i.e.
    $$
    m_{\Theta}(z) = b + iS\mu_{\Theta} = b + \sum_{k \in \mathbb{Z}} \frac{\mu_{\Theta}(a_k)}{\pi}\left(\frac{1}{a_k-z} - \frac{a_k}{1+a_k^2}\right).
    $$
    Now let's observe that $$l = b - \sum_{k \in \mathbb{Z}}\frac{\mu_{\Theta}(a_k)}{\pi}\frac{a_k}{1+a_k^2}.$$ Indeed, we just obtained that the limit of $m_{\Theta}(iy)$ is finite as $y$ goes to $+\infty$. Therefore $m_{\Theta}$ is bounded on $i(\varepsilon,\infty)$ for a fixed $\varepsilon > 0$ and
\begin{align*}
l = \lim_{y \rightarrow +\infty} m_{\Theta}(iy) &= b + \frac{1}{\pi}\sum_{k \in \mathbb{Z}} \lim_{y \rightarrow +\infty} \mu_{\Theta}(a_k)\left(\frac{1}{a_k-iy} - \frac{a_k}{1+a_k^2}\right)\\ &= b - \frac{1}{\pi}\sum_{k \in \mathbb{Z}} \mu_{\Theta}(a_k)\frac{a_k}{1+a_k^2},
\end{align*}
which implies
\begin{equation}\label{repm}
 m_{\Theta}(z) = l + \frac{1}{\pi}\sum_{k \in \mathbb{Z}} \frac{\mu_{\Theta}(a_k)}{(a_k-z)},
\end{equation}
so the only unknown on the right hand side is $l$.

At this point we consider three different cases of our spectral data given in the last item in the theorem statement. 

If $L = \lim_{y \rightarrow +\infty}\Theta(iy)$ is known, then $L$ uniquely determines $l = \lim_{y \rightarrow +\infty}m_{\Theta}(iy)$, since $L = (l-i)/(l+i)$ and $(x-i)/(x+i)$ is an injective function. Therefore by \eqref{repm} $m_{\Theta}$ is uniquely determined. 

If $c = m_{\Theta}(0)$ is known, then using \eqref{repm} we get
$$
l = c - \frac{1}{\pi}\sum_{k \in \mathbb{Z}} \frac{\mu_{\Theta}(a_k)}{a_k},
$$
so $l$ is known and hence $m_{\Theta}$ is uniquely determined.

If $p=\prod_{n \in \mathbb{Z}}a_n/b_n \neq 1$ is known, in order to show uniqueness of $l$ let us consider another MIF $\widetilde{\Theta}$ satisfying the following properties:
\begin{itemize}
    \item $\{\widetilde{\Theta}=1\} = \{a_n\}_{n \in \mathbb{Z}}$,
    \item $\widetilde{\Theta}'(a_n)=\Theta'(a_n)$ for any $n \in \mathbb{Z}$,
    \item $\displaystyle
    \sum_{n \in \mathbb{Z}}\frac{|\widetilde{I}_n|}{1+\text{dist}(0,\widetilde{I}_n)} < \infty
    $ (and hence $\mu_{\widetilde{\Theta}}(\infty) = 0$),
\end{itemize}
where $\widetilde{I}_n = (a_n,\widetilde{b}_n)$ and $\{\widetilde{b}_n\}_{n \in \mathbb{Z}} := \sigma(-\Theta)$. In other words MIFs $\Theta$ and $\widetilde{\Theta}$ share the same Clark measure, i.e. $\mu_{\Theta} = \mu_{\widetilde{\Theta}}$, and $\{\widetilde{I}_n\}_{n}$ satisfy the summability condition, so 
\begin{equation*}
    m_{\widetilde{\Theta}}(z) = i\frac{1+\widetilde{\Theta}(z)}{1-\widetilde{\Theta}(z)} = \widetilde{c} \prod_{n\in \mathbb{Z}}\left(\frac{z}{\widetilde{b}_{n}}-1\right)\left(\frac{z}{a_n}-1\right)^{-1} = \widetilde{l} + \frac{1}{\pi}\sum_{n \in \mathbb{Z}} \frac{\mu_{\Theta}(a_n)}{(a_n-z)}.
\end{equation*}
Let us recall from \eqref{Lcpeqn} that $l = cp$, so we have $l/c = \widetilde{l}/\widetilde{c} = p$. Also by assumption $p \neq 1$, we get $l \neq c$ and $\widetilde{l} \neq \widetilde{c}$. Letting $z=0$ in \eqref{repm}, we also get $c - \widetilde{c} = l - \widetilde{l}$. If we call this value $x$, then we have $$
\frac{l}{c} = \frac{l-x}{c-x}
$$
and hence $x(l-c) = 0$, so $x=0$, i.e $l=\widetilde{l}$ and $m_{\Theta}$ is uniquely determined. 

In all three cases $m_{\Theta}$ is uniquely determined from the given spectral data. Then $\Theta$ is uniquely determined through the one-to-one correspondence between MHFs and MIFs, so we get the desired result.
\end{proof}

\begin{proof}[\normalfont \textbf{Proof of Theorem~\ref{uniqres2}}]
Recall that knowing $\mu_{\Theta}$ means knowing $\{a_n\}_{n}$ and $\{\Theta'(a_n)\}_{n}$. We also know that $\mu_{\Theta}(\infty) = 0$, so the MHF corresponding to $\Theta$ has the representation 
    \begin{equation}\label{mfunctionrep2}
    m_{\Theta}(z) = b + iS\mu_{\Theta} = b + \frac{1}{\pi}\sum_{n \in \mathbb{Z}} \mu_{\Theta}(a_n)\left(\frac{1}{a_n-z} - \frac{a_n}{1+a_n^2}\right).
    \end{equation}
The condition $ \int 1/(1+|x|) d\mu_{\Theta}(x) < \infty$ means $\sum_{n \in \mathbb{Z}}\mu(a_n)/(1+|a_n|) < \infty$, which implies that $ \sum_{n \in \mathbb{Z}} a_n\mu(a_n)/(1+a_n^2)
$
is convergent and 
$
\sum_{n \in \mathbb{Z}}\mu(a_n)/(a_n - z)
$
is uniformly bounded by $\sum_{n \in \mathbb{Z}}\mu(a_n)/(1+|a_n|)$ on the set $i(1,\infty)$. These observations together with the representation \eqref{mfunctionrep2} and Lemma \ref{lmm1} imply
$$
c\prod_{n \in \mathbb{Z}}\frac{a_n}{b_n} = \lim_{y \rightarrow \infty} m_{\Theta}(iy) =  b - \frac{1}{\pi}\sum_{n \in \mathbb{Z}} \mu_{\Theta}(a_n)\frac{a_n}{1+a_n^2},
$$
i.e. $\prod_{n \in \mathbb{Z}}a_n/b_n$ is convergent. Therefore 
$$
m_{\Theta}(z) = l + \frac{1}{\pi}\sum_{n \in \mathbb{Z}} \frac{\mu_{\Theta}(a_n)}{a_n-z},
$$
so we get the representation \eqref{repm} again. In order to show uniqueness of $m_{\Theta}$, we can follow the arguments (starting after \eqref{repm}) we used in the proof of Theorem \ref{uniqres1} in all three cases of given spectral data. Uniqueness of $m_{\Theta}$ gives uniqueness of $\Theta$ through the one-to-one correspondence between MHFs and MIFs, so we get the desired result.
\end{proof}

\begin{proof}[\normalfont \textbf{Proof of Theorem~\ref{uniqres3}}]
Let $d > \max\{|a_1|,|b_1|\}$. Since the MHF $m_{\Theta}$ corresponding to $\Theta$ satisfies the infinite product representation \eqref{mfcn1b}, by substituting $z-d$ for $z$ we can keep the derivative values of $\Theta$ same and make $\{a_n+d\}_{n \in \mathbb{N}}$ and $\{b_n+d\}_{n \in \mathbb{N}}$ subsets of $\mathbb{R}_+$. Therefore without loss of generality we assume $\{a_n\}_{n \in \mathbb{N}} \bigcup \{b_n\}_{n \in \mathbb{N}} \subset \mathbb{R}_+$.

Now we are ready to prove part $(1)$. We know that
\begin{equation}\label{Schwarzintrep}
    m_{\Theta}(z) = az + b + iS\mu_{\Theta} = az + b + \frac{1}{\pi}\sum_{n \in \mathbb{N}} \mu_{\Theta}(a_n)\left(\frac{1}{a_n-z} - \frac{a_n}{1+a_n^2}\right), 
\end{equation}
where $a \geq 0$, $b \in \mathbb{R}$ and $\mu_{\Theta}(a_n) \geq 0$ for any $n \in \mathbb{N}$. First let's show that $a=0$. Note that in part $(1)$ we assume $a_n < b_n$, so $m_{\Theta}$ satisfies the infinite product representation \eqref{mfcn1b} with positive $c = m_{\Theta}(0)$. Then partial products of $m_{\Theta}$ are represented as
$$
m_{\Theta}^{(N)}(z) := c\prod_{n=1}^{N}\frac{a_n}{a_n-z}\frac{b_n-z}{b_n} = \sum_{n=1}^N \frac{\alpha_{n,N}}{a_n-z} + c\prod_{n=1}^{N}\frac{a_n}{b_n}, 
$$
where $c > 0$ and $\alpha_{n,N} > 0$ for any $n \in \{1,\cdots,N\}$, $N \in \mathbb{N}$. Let us also note that $\alpha_{n,N}$ converges to $\mu_{\Theta}(a_n)/\pi$ for any fixed $n$ and $\{\mu_{\Theta}(a_n)/a_n^2\}_{n \in \mathbb{N}} \in l^1(\mathbb{N})$. Then for $K < N$, let us consider the difference 
$$
m_{\Theta}^{(N)}(z) - \sum_{n=1}^K \alpha_{n,N}\left(\frac{1}{a_n-z} - \frac{a_n}{1+a_n^2}\right) = \sum_{n=K+1}^{N}\frac{\alpha_{n,N}}{a_n-z} + c\prod_{n=1}^{N}\frac{a_n}{b_n} + \sum_{n=1}^K \alpha_{n,N}\frac{a_n}{1+a_n^2}.
$$
Note that for any $z < 0$, $K < N$ and $N \in \mathbb{N}$, the right-hand side is positive since $\alpha_{n,N}$, $c$, $a_n$, $b_n$ are positive numbers, so the left-hand side is positive for any $z < 0$, $K < N$ and $N \in \mathbb{N}$. First letting $N$ tend to infinity we get
$$
m_{\Theta}(z) - \frac{1}{\pi}\sum_{n=1}^K \mu_{\Theta}(a_n)\left(\frac{1}{a_n-z} - \frac{a_n}{1+a_n^2}\right),
$$
and then letting $K$ tend to infinity we get 
$$
m_{\Theta}(z) - \frac{1}{\pi}\sum_{n\in \mathbb{N}} \mu_{\Theta}(a_n)\left(\frac{1}{a_n-z} - \frac{a_n}{1+a_n^2}\right).
$$
We observed that this difference should be non-negative for $z<0$. However by \eqref{Schwarzintrep} it is nothing but $az + b$ with $a\geq0$, which is possible only if $a=0$, i.e.
\begin{equation}\label{Schwarzintrep2}
    m_{\Theta}(z) =  b + \frac{1}{\pi}\sum_{n \in \mathbb{N}} \mu(a_n)\left(\frac{1}{a_n-z} - \frac{a_n}{1+a_n^2}\right), 
\end{equation}
Therefore 
$$
b = c - \frac{1}{\pi}\sum_{n \in \mathbb{N}} \mu(a_n)\left( \frac{1}{a_n+a_n^3}\right),
$$
so $b$ and hence $m_{\Theta}$ is uniquely determined. Using the one-to-one correspondence between MHFs and MIFs we get the desired result of part $(1)$.

For part $(2)$, let's observe that $-\Theta$ is a MIF and its MHF is $-1/m_{\Theta}$. Using $-1/m_{\Theta}$ as our MHF allows us to swap the roles of $\{a_n\}$ and $\{b_n\}$. Also note that $a_n > b_n$ and $m_{\Theta} < 0$ in part $(2)$. Therefore $-1/m_{\Theta}$ (or $-\Theta$) with the spectral data of part $(2)$ falls to the setting of part $(1)$, so we follow the proof of part $(1)$ and obtain uniqueness of $-1/m_{\Theta}$. Using the one-to-one correspondence between MHFs and MIFs we get the desired result of part $(2)$.
\end{proof}

\begin{remark}\label{rem1}
In the proof of Theorem \ref{uniqres3}, we showed that there is no point mass at infinity. However it does not necessarily imply that $L:=\lim_{y\rightarrow+\infty}\Theta(iy) \neq 1$ (or equivalently $l:=\lim_{y\rightarrow+\infty} m_{\Theta}(iy) < \infty$). If we know that this limit condition is satisfied, then we can replace $c$ in the spectral data with $L$ by following the arguments we used in the proof of Theorem \ref{uniqres1}. Same applies to $p:=\prod_{n \in \mathbb{Z}}a_n/b_n$, if it is convergent and not equal to $1$.
\end{remark}

\begin{proof}[\normalfont \textbf{Proof of Theorem~\ref{uniqres4}}]
From Lemma \ref{lmm1}, without loss of generality we can assume that $m_{\Theta}$ has the representation
\begin{equation*}
m_{\Theta}(z) = c \prod_{n\in \mathbb{N}}\left(\frac{z}{b_{n}}-1\right)\left(\frac{z}{a_n}-1\right)^{-1}.
\end{equation*}
Note that for any $k\in A$, we know
\begin{equation}\label{con1}
\frac{-\mu_{\Theta}(a_k)}{\pi} = \mathrm{Res}(m_{\Theta},a_k) = c(b_k - a_k)\frac{a_k}{b_k} \prod_{n\in \mathbb{N},n\neq k}\left(\frac{a_k}{b_{n}}-1\right)\left(\frac{a_k}{a_n}-1\right)^{-1}.
\end{equation}
Let $m_{\Theta}(z) = f(z)g(z)$, where $f$ and $g$ are two infinite products defined as
\begin{equation*}
 f(z) := c \prod_{n\in A}\left(\frac{z}{b_{n}}-1\right)\left(\frac{z}{a_n}-1\right)^{-1}, \qquad
 g(z) := \prod_{n\in\mathbb{N} \text{\textbackslash} A}\left(\frac{z}{b_{n}}-1\right)\left(\frac{z}{a_n}-1\right)^{-1}
\end{equation*}
Let us observe that at any point of $\{a_n\}_{n \in A}$, the infinite product
\begin{equation}\label{con2}
 g(z)=\prod_{n\in\mathbb{N} \text{\textbackslash} A}\left(\frac{z}{b_{n}}-1\right)\left(\frac{z}{a_n}-1\right)^{-1}
\end{equation}
is also known. Then conditions \eqref{con1} and \eqref{con2} imply that for any $k\in A$, we know
\begin{equation*}
\mathrm{Res}(f,a_k) = \frac{\mathrm{Res}(m_{\Theta},a_k)}{g(a_k)}.
\end{equation*}
Since real zeros and poles of $f$ are simple and interlacing, $f$ is a MHF. If $\Phi$ is the MIF corresponding to $f$, then $\{\Phi = 1\} = \{a_n\}_{n \in A}$, $\{\Phi = -1\} = \{b_n\}_{n \in A}$ and our spectral data become $\{a_n\}_{n \in A}$, $\{\Phi'(a_n)\}_{n \in A}$ and $c = f(0)$, $\lim_{y \rightarrow +\infty}\Phi(iy)$ or $\prod_{n \in A}a_n/b_n$. The convergence condition \eqref{short1seq} implies $0 < \prod_{n \in \mathbb{N}}|a_n|/|b_n| < \infty$ and hence $0 < \prod_{n \in A}|a_n|/|b_n| < \infty$. Therefore applying the proof of Theorem \ref{uniqres1}, we determine $\Phi$ and hence $\{b_n\}_{n \in A}$ uniquely. This means unique recovery of $m_{\Theta}$ and then unique recovery of $\Theta$ through the one-to-one correspondence between MHFs and MIFs.
\end{proof}

\begin{proof}[\normalfont \textbf{Proof of Theorem~\ref{uniqres5}}]
For part $(1)$, following arguments from the proof of Theorem \ref{uniqres4}, we convert our problem to the problem of unique determination of the MIF $\Phi$ from the spectral data $\{a_n\}_{n \in A}$ and $\{\Phi'(a_n)\}_{n \in A}$, where $\{\Phi = 1\} = \{a_n\}_{n \in A}$, $\{\Phi = -1\} = \{b_n\}_{n \in A}$. Since $\{a_n\}_{n \in A}$ is bounded below, by Theorem \ref{uniqres3} part $(1)$ these spectral data uniquely determine $\Phi$ and hence $\{b_n\}_{n \in A}$. This means unique recovery of $m_{\Theta}$ and then unique recovery of $\Theta$ through the one-to-one correspondence between MHFs and MIFs.

For part $(2)$, considering the MHF $-1/m_{\Theta}$ and following arguments from the proof of Theorem \ref{uniqres4}, we convert our problem to the problem of unique determination of the MIF $\Phi$ from the spectral data $\{b_n\}_{n \in A}$ and $\{\Phi'(b_n)\}_{n \in A}$, where $\{\Phi = 1\} = \{b_n\}_{n \in A}$, $\{\Phi = -1\} = \{a_n\}_{n \in A}$. Since $\{a_n\}_{n \in A}$ is bounded below, by Theorem \ref{uniqres3} part $(2)$ these spectral data uniquely determine $\Phi$ and hence $\{a_n\}_{n \in A}$. This means unique recovery of $-1/m_{\Theta}$ and then unique recovery of $\Theta$ through the one-to-one correspondence between MHFs and MIFs.
\end{proof}

\begin{remark}
Remark \ref{rem1} is also valid for Theorem \ref{uniqres5}.    
\end{remark}

\section{Applications to inverse spectral theory of canonical systems}\label{Sec3}

In this section we prove our inverse spectral theorems on canonical Hamiltonian systems. Recall that we introduced these differential systems in \eqref{cHs} and discussed some of their properties in the limit circle case (i.e. $\int_0^L \text{Tr} H(x) dx < \infty$) for $0 < L < \infty$ in Section \ref{Sec1}. Since we are in the limit circle case, we can normalize our Hamiltonian systems by letting the trace to be identically one. Also note that throughout this section $L$ will be arbitrary but fixed.

In order to obtain our results we need another definition. The \textit{transfer matrix} $T(z)$ is the $2\times 2$ matrix solution of \eqref{cHs} with the initial condition identity matrix at $x=0$. Some properties of transfer matrices are given in Theorem 1.2 in \cite{REM18}. Following definition 4.3 in \cite{REM18}, we call collection of all matrices with these properties $TM$, namely the set of matrix functions $T:\mathbb{C} \rightarrow \mathrm{SL}(2,\mathbb{C})$ such that $T$ is entire, $T(0) = I_2$, $\overline{T(\overline{z})} = T(z)$ and if $\mathrm{Im}z \geq 0$, then $i(T^{*}(z)JT(z) - J) \geq 0$. We denote any $T \in TM$ by
$$
T(z) := {\begin{pmatrix}
			A(z) & B(z) \\
			C(z) & D(z) 
		\end{pmatrix}}.
$$
Note that the transfer matrix of any trace normed canonical system on $[0,L]$ satisfies $C'(0) - B'(0) = L$ and $C'(0) - B'(0) \geq 0$ for any $T \in TM$ (see page 106 in \cite{REM18} for explanations). Therefore if we define the disjoint subset
\begin{equation}\label{TML}
    TM(L) := \{ T \in TM~|~ C'(0) - B'(0) = L\},
\end{equation}
then $TM = \cup_{L \geq 0} TM(L)$. The following result shows that $TM$ characterizes all transfer matrices on finite intervals.

\begin{theorem}\label{Remlingthm}\emph{\textbf{(}\cite{REM18}, \textit{Theorem 5.2}\textbf{)}}
Let $L \geq 0$. For every $T \in TM(L)$, there is a unique trace normed canonical Hamiltonian system $H$ on $[0, L]$ such that $T$ is the transfer matrix of $H$. 
\end{theorem}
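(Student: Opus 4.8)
The plan is to reduce the statement to the structure theory of de Branges spaces, which supplies the precise dictionary between trace-normalized Hamiltonians on an interval and nested chains of de Branges subspaces. The transfer matrix $T$ records a single de Branges space, and recovering $H$ amounts to recovering the entire chain of intermediate transfer matrices $T(\cdot,x)$, $0\le x\le L$, from that one space.

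First I would manufacture a Hermite--Biehler function from the entries of $T$. Writing $T=\begin{pmatrix}A&B\\C&D\end{pmatrix}$ and setting $E:=A-iC$, the reality condition $\overline{T(\overline z)}=T(z)$ forces $A,C$ to be real entire functions, so $E^{*}(z):=\overline{E(\overline z)}=A+iC$, and $T(0)=I_2$ gives $E(0)=1$. Expanding the defining positivity $i(T^{*}(z)JT(z)-J)\ge 0$ for $\mathrm{Im}\,z\ge 0$, its $(1,1)$ entry reads $-2\,\mathrm{Im}(A\overline C)\ge 0$, and since
\begin{equation*}
|E(z)|^{2}-|E^{*}(z)|^{2} = -4\,\mathrm{Im}\big(A(z)\overline{C(z)}\big),
\end{equation*}
this is exactly the Hermite--Biehler inequality $|E^{*}(z)|\le |E(z)|$ on $\mathbb{C}_{+}$. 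Thus $E$ is Hermite--Biehler and determines a de Branges space $\mathcal H(E)$.

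Next I would invoke the de Branges ordering and subspace theorems: $\mathcal H(E)$ carries a unique maximal chain of de Branges subspaces, totally ordered by isometric inclusion, and this chain varies continuously apart from the indivisible intervals that correspond to the degenerate pieces of the Hamiltonian. Parametrizing the chain so that the associated Hamiltonian has trace identically one converts the abstract parameter into arclength $x$, while the identity $C'(0)-B'(0)=L$ built into the class $TM(L)$ pins the total length to be exactly $L$. Reading $H$ off the infinitesimal generator of the chain yields $H\ge 0$ with $H\in L^{1}_{\mathrm{loc}}(0,L)$, and one verifies that the transfer matrix of this $H$ evaluated at $x=L$ returns $T$, since both solve $Jf'=-zHf$ with the initial value $I_2$.

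For uniqueness, two trace-normalized Hamiltonians $H_1,H_2$ on $[0,L]$ with the same transfer matrix $T$ produce the same function $E$, hence the same space $\mathcal H(E)$ and, by the uniqueness half of the structure theorem, the same chain; being trace-normalized, both are parametrized identically, which forces $H_1=H_2$ almost everywhere. I expect the main obstacle to lie entirely inside the de Branges machinery: proving that $\mathcal H(E)$ admits a \emph{unique} maximal chain, that the chain is continuous and parametrizable by trace/arclength, and that the reconstructed $H$ has the stated integrability while reproducing $T$. The correspondence between gaps in the chain and indivisible intervals, together with the regularity of the recovered Hamiltonian, is where the real difficulty sits; by comparison the Hermite--Biehler reduction above and the uniqueness bookkeeping are routine.
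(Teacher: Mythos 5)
This statement is quoted verbatim from \cite{REM18} (Theorem 5.2) and the paper offers no proof of it, so the only meaningful comparison is with the cited source, whose argument — like your sketch — runs through de Branges's structure theory for Hilbert spaces of entire functions. The computations you actually carry out are correct (the $(1,1)$ entry of $i(T^{*}JT-J)\ge 0$ does give $-2\,\mathrm{Im}(A\overline{C})\ge 0$, hence $|E^{*}|\le|E|$ for $E=A-iC$, and $C'(0)-B'(0)=\int_0^L \mathrm{Tr}\,H\,dx=L$ is the right normalization), and the steps you defer — the ordering theorem, continuity and trace-parametrization of the chain, the handling of indivisible intervals (e.g.\ the degenerate case $C\equiv 0$, where $E$ is constant), and the recovery of $B,D$ from $A,C$ within $TM(L)$ — are exactly where the substance of the de Branges machinery lies, so your proposal is a faithful outline of the standard proof rather than an independent one.
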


Next, we focus on connections between $m$-functions and transfer matrices. The entries of $T$ appears in $m$-functions with Dirichlet-Neumann and Dirichlet-Dirichlet boundary conditions, namely $-B/A = m_{0,\pi/2}$ and $-D/C = m_{0,0}$ (page 86, \cite{REM18}). This allows us to obtain unique recovery of transfer matrices from $m$-functions.

\begin{proposition}\label{csprop}
Let $L > 0$. Then the Weyl m-function $m_{0,\pi/2}$ uniquely determines the transfer matrix $T \in TM(L)$.
\end{proposition}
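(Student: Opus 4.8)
The plan is to establish uniqueness directly: given two matrices $T=\begin{pmatrix}A&B\\C&D\end{pmatrix}$ and $\widetilde T=\begin{pmatrix}\widetilde A&\widetilde B\\\widetilde C&\widetilde D\end{pmatrix}$ in $TM(L)$ with $-B/A=m_{0,\pi/2}=-\widetilde B/\widetilde A$, I would show $T=\widetilde T$, and the proposition follows. The only tools needed are the defining properties of $TM(L)$: the entries are real entire functions with $AD-BC\equiv 1$, the normalization $T(0)=I_2$ (so $A(0)=D(0)=1$ and $B(0)=C(0)=0$), the trace condition $C'(0)-B'(0)=L$, and the positivity $i(T^*(z)JT(z)-J)\ge 0$ for $\mathrm{Im}\,z\ge 0$. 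A fact used repeatedly is that $A$ and $B$ have no common zero, since a common zero would make $\det T$ vanish there; in particular every pole $a_n$ of $m_{0,\pi/2}$ (a zero of $A$) satisfies $B(a_n)\ne 0$.

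First I would recover the top row $(A,B)$. From $B\widetilde A=\widetilde B A$ and coprimality, $A$ and $\widetilde A$ have precisely the same simple real zeros, so $A/\widetilde A$ is entire and zero-free; since transfer-matrix entries are of bounded type in $\mathbb C_\pm$ (hence of Cartwright class), this ratio equals $e^{\beta z}$, with $\beta\in\mathbb R$ forced by reality of $A,\widetilde A$ and the constant fixed by $A(0)=\widetilde A(0)=1$. Because a Cartwright function has vanishing indicator along the real axis, both $\log|A(x)|/x$ and $\log|\widetilde A(x)|/x$ tend to $0$ as $x\to+\infty$, so the identity $\log|A(x)|=\log|\widetilde A(x)|+\beta x$ forces $\beta=0$. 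Thus $A=\widetilde A$, and then $B=\widetilde B$ from the common ratio.

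With the top row fixed, the determinant condition already determines $C$ on the spectrum: evaluating $AD-BC=1$ at a zero $a_n$ of $A$ gives $C(a_n)=-1/B(a_n)$, and identically $\widetilde C(a_n)=-1/B(a_n)$. On the other hand, testing the positivity condition against $(1,0)^{\mathsf T}$ gives $i\,(T^*JT)_{11}=-2\,\mathrm{Im}(\overline{C}A)\ge 0$, i.e. $C/A$ is a meromorphic Herglotz function, and likewise $\widetilde C/A$. These two Herglotz functions have the same poles (the $a_n$) with the same residues $C(a_n)/A'(a_n)=\widetilde C(a_n)/A'(a_n)$, so in their Herglotz representations the singular parts coincide and $R:=(C-\widetilde C)/A$ is entire and at most linear, $R(z)=\kappa z+\lambda$. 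Now $C(0)=\widetilde C(0)=0$ gives $\lambda=0$, while $C'(0)=L+B'(0)=\widetilde C'(0)$ together with $C-\widetilde C=A\,R$ and $A(0)=1$ gives $\kappa=(C-\widetilde C)'(0)=0$. Hence $R\equiv 0$, so $C=\widetilde C$ and $D=(1+BC)/A=\widetilde D$. Thus $T=\widetilde T$, and by Theorem \ref{Remlingthm} the trace-normed Hamiltonian is determined as well.

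I expect the main difficulty to lie in the second row, because $m_{0,\pi/2}$ only reflects the top row $(A,B)$, and one must manufacture enough independent constraints on $(C,D)$ from the abstract axioms of $TM$. The point that makes this succeed is the interplay of two ingredients: the determinant relation pins $C$ at the zeros of $A$, while the positivity relation forces $C/A$ to be Herglotz, and together they collapse the ambiguity to an entire function of at most linear growth that the two normalizations at the origin annihilate. The one analytic input requiring care is the Cartwright-class/indicator step in the top-row argument, which is what excludes a spurious exponential factor $e^{\beta z}$; I would justify it from the Smirnov/bounded-type behaviour of transfer-matrix entries in the half-planes.
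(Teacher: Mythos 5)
Your proof is correct, but it is considerably more self-contained than the paper's. The paper settles the question in a few lines by invoking Theorem 4.22 of \cite{REM18}, which asserts that two elements of $TM$ sharing the same $-B/A$ differ exactly by left multiplication by $\left(\begin{smallmatrix}1&0\\ az&1\end{smallmatrix}\right)$ for some $a\in\mathbb{R}$; it then only needs $C'(0)-B'(0)=\widetilde{C}'(0)-\widetilde{B}'(0)=L$ and $A(0)=1$ to force $a=0$. You re-derive that structure theorem from the axioms of $TM$: coprimality of $A,B$ from $\det T\equiv 1$ gives equality of the (simple, real) zero sets of $A$ and $\widetilde{A}$; a bounded-type/Cartwright argument excludes an exponential factor, yielding $A=\widetilde{A}$, $B=\widetilde{B}$; then the determinant pins $C$ at the zeros of $A$ while the $(1,1)$-entry of the $J$-contractivity condition makes $C/A$ Herglotz, so $(C-\widetilde{C})/A$ is affine and is annihilated by the normalizations at the origin --- your $R(z)=\kappa z$ is precisely Remling's $az$, and your final step coincides with the paper's. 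The trade-off is that you avoid the cited lemma but import instead the nontrivial fact that entries of matrices in $TM$ are of bounded type in $\mathbb{C}_\pm$ (a de Branges/Krein-type result, also available in \cite{REM18}), which you flag but do not prove. One small point to tighten: since $A$ has real zeros, ``$\log|A(x)|/x\to 0$'' must be read as a $\limsup$ statement; alternatively, and more cleanly, note that $A/\widetilde{A}=e^{\beta z}$ with $\beta\neq 0$ already violates the logarithmic integrability on $\mathbb{R}$ required of the Cartwright class. Neither issue affects the validity of the argument.
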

\begin{proof}
Let $T,\widetilde{T} \in TM(L)$ share the same  $m_{0,\pi/2}$, i.e. $-B(z)/A(z) = -\widetilde{B}(z)/\widetilde{A}(z)$. By Theorem 4.22 in \cite{REM18}
\begin{equation}\label{TMeq}
    {\begin{pmatrix}
			\widetilde{A}(z) & \widetilde{B}(z) \\
			\widetilde{C}(z) & \widetilde{D}(z) 
		\end{pmatrix}} = {\begin{pmatrix}
			1 & 0 \\
			az & 1 
		\end{pmatrix}}{\begin{pmatrix}
			A(z) & B(z) \\
			C(z) & D(z) 
		\end{pmatrix}} = {\begin{pmatrix}
			A(z) & B(z) \\
			azA+C(z) & azB+D(z) 
		\end{pmatrix}}
\end{equation}
for some $a \in \mathbb{R}$. Since $T,\widetilde{T} \in TM(L)$, we also know that $C'(0) - B'(0) = \widetilde{C}'(0) - \widetilde{B}'(0) = L$ and $T(0) = \widetilde{T}(0) = I_2$. Therefore by \eqref{TMeq}, $L = \widetilde{C}'(0) - \widetilde{B}'(0) = aA(0) + C'(0) - B'(0) = a + L$ and hence $T = \widetilde{T}$.
\end{proof}
In order to consider general boundary conditions we introduce another notation. Again following \cite{REM18} let
$$
R_{\alpha} := {\begin{pmatrix}
			\cos\alpha& -\sin\alpha \\
			\sin\alpha & \cos\alpha 
		\end{pmatrix}}.
$$
Note that $R_{\alpha}$ is a unitary matrix, $R^{-1}_{\alpha} = R_{-\alpha}$ and $\mathrm{det}R_{\alpha} = 1$. If $f$ is a single variable, then by $R_{\alpha}f$ we mean division of the first entry of the $2\times 1$ vector $R_{\alpha}(f,1)^{\mathrm{T}}$ by its second entry. For example $m_{\alpha,\beta} = R_{-\alpha}m_{0,\beta}$. We will use the same notation for the transfer matrices. 

Now we are ready to prove our inverse spectral results. Let's start with the proof of Theorem \ref{invspecprb2}, since the proof of Theorem \ref{invspecprb1} will require handling both spectra in the same MHF and hence introducing generalized $m$-functions and $R$-matrices.
\begin{proof}[\normalfont \textbf{Proof of Theorem~\ref{invspecprb2}}]
In order to use Theorem \ref{uniqres3}, first let's show that $m_{\alpha,\beta}(0)$ solely depends on $\alpha$ and $\beta$. We discussed the identity $m_{0,\beta} = R_{\alpha}m_{\alpha,\beta}$. Also recalling the identity $m_{0,\beta}(z) = T^{-1}(z)\cot\beta$ ($(3.4)$ in \cite{REM18}), we get
$$
m_{\alpha,\beta}(0) = R_{-\alpha}m_{0,\beta}(0) = R_{-\alpha}\cot\beta = \frac{\cos\alpha\cos\beta + \sin\alpha\sin\beta}{-\sin\alpha\cos\beta + \cos\alpha\sin\beta} = \cot(\beta-\alpha).
$$
Therefore by Theorem \ref{uniqres3} part $(1)$, the spectral measure $\mu_{\alpha,\beta}$ and boundary conditions $\alpha$ and $\beta$ uniquely determine the Weyl inner function $\Theta_{\alpha,\beta}$ and hence the Weyl m-function $m_{\alpha,\beta}$ since there is a one-to-one correspondence between MIFs and MHFs. By the identity $m_{0,\beta} = R_{\alpha}m_{\alpha,\beta}$, the $m$-function $m_{\alpha,\beta}$ and the boundary condition $\alpha$ uniquely determine $m_{0,\beta}$. We still need to pass to $\pi/2$ from general $\beta$ in order to use Proposition \ref{csprop}. For this, we use a transformation of $H$, namely $H_{\gamma} := R_{\gamma}^{\mathrm{T}}HR_{\gamma}$. If $m^{(\gamma)}$ denotes the $m$-function of $H_{\gamma}$, then $m^{(\gamma)}_{0,\beta-\gamma}(z) = R_{-\gamma}m_{0,\beta}(z)$ (see Theorem 3.20 and explanation below that in \cite{REM18}). By letting $\gamma = \beta - \pi/2$, we can uniquely determine $m^{(\gamma)}_{0,\pi/2}$ from the knowledge of $m_{0,\beta}$ and $\beta$. Now by Proposition \ref{csprop} we obtain the transfer matrix of $H_{\beta - \pi/2}$ and then by Theorem \ref{Remlingthm} the Hamiltonian $H_{\beta - \pi/2}$ uniquely. Finally, recalling $H = R_{\gamma}H_{\gamma}R_{\gamma}^{\mathrm{T}}$, we get unique determination of $H$ from uniqueness of $H_{\beta-\pi/2}$ and $\beta$. 
\end{proof}

\begin{proof}[\normalfont \textbf{Proof of Theorem~\ref{invspecprb1}}]
In order to handle both spectra in the same MHF let's introduce generalized $m$-functions and $R$-matrices: $$m_{\alpha_1,\alpha_2,\beta}(z) := \frac{-\sin(\alpha_2)f_1(0) + \cos(\alpha_2)f_2(0)}{-\sin(\alpha_1)f_1(0) + \cos(\alpha_1)f_2(0)}, \qquad R_{\alpha_1,\alpha_2} := {\begin{pmatrix}
			-\sin\alpha_2& \cos\alpha_2 \\
			-\sin\alpha_1 & \cos\alpha_1 
		\end{pmatrix}}.$$
Note that $\det R_{\alpha_1,\alpha_2} = \sin(\alpha_1 - \alpha_2)$, so $R_{\alpha_1,\alpha_2}$ is invertible. Also
$m_{\alpha_1,\alpha_2,\beta} = R_{\alpha_1,\alpha_2} m_{0,\beta}$ and hence $m_{\alpha_1,\alpha_2,\beta} = R_{\alpha_1,\alpha_2}R^{-1}_{\alpha_1}m_{\alpha_1,\beta}$. Moreover $\sigma_{\alpha_1,\beta}$ and $\sigma_{\alpha_2,\beta}$ are sets of poles and zeros of $m_{\alpha_1,\alpha_2,\beta}$ respectively. Another critical observation is that $m_{\alpha_1,\alpha_2,\beta}$ is a MHF since $m_{0,\beta}$ is a MHF. Therefore we can introduce corresponding MIF $\Theta_{\alpha_1,\alpha_2,\beta}$ and spectral measure $\mu_{\alpha_1,\alpha_2,\beta} = \sum\gamma_{\alpha_1,\alpha_2,\beta}^{(n)}\delta_{a_n}$. Keeping this notations in mind, we need to pass from $\mu_{\alpha_1,\beta}$ to $\mu_{\alpha_1,\alpha_2,\beta}$. We can do this using two observations: firstly both measures are supported on $\sigma_{\alpha_1,\beta}$, secondly the point masses or norming constants are related by the identity $\gamma^{(n)}_{\alpha_1,\alpha_2,\beta} = \sin(\alpha_1 - \alpha_2)\gamma^{(n)}_{\alpha_1,\beta}$, which is also valid for the point masses at infinity. First observation follows from the fact that the $m_{\alpha_1,\alpha_2,\beta}$ and $m_{\alpha_1,\beta}$ share the same set of poles $\sigma_{\alpha_1,\beta}$. Let's prove the second observation. For simplicity we use the following notation: $s_k := \sin(\alpha_k)$ and $c_k := \cos(\alpha_k)$. We know that $a_n$ is a pole for both $m$-functions, so
$$
\mathrm{Res}(m_{\alpha_1,\beta},z=a_n) = \big(c_1f_1(0) + s_1f_2(0)\big)\Big|_{z=a_n}\lim_{z \rightarrow a_n}\frac{z-a_n}{-s_1f_1(0) + c_1f_2(0)}
$$
and similarly
$$
\mathrm{Res}(m_{\alpha_1,\alpha_2,\beta},z=a_n) = \big(-s_2f_1(0) + c_2f_2(0)\big)\Big|_{z=a_n}\lim_{z \rightarrow a_n}\frac{z-a_n}{-s_1f_1(0) + c_1f_2(0)}.
$$
Therefore
$$
\frac{\mathrm{Res}(m_{\alpha_1,\beta},z=a_n)}{\mathrm{Res}(m_{\alpha_1,\alpha_2,\beta},z=a_n)} = \frac{c_1\frac{f_1(0)}{f_2(0)}\Big|_{z=a_n} + s_1}{-s_2\frac{f_1(0)}{f_2(0)}\Big|_{z=a_n} + c_2}.$$
Since $a_n$ is a pole, at $z=a_n$, $-s_1f_1(0)+c_1f_2(0) = 0$, i.e. $(f_1(0)/f_2(0))|_{z=a_n} = c_1/s_1$. Hence 
$$
\frac{\mathrm{Res}(m_{\alpha_1,\beta},z=a_n)}{\mathrm{Res}(m_{\alpha_1,\alpha_2,\beta},z=a_n)} = \frac{\frac{c_1^2}{s_1}+s_1}{-s_2\frac{c_1}{s_1}+c_2} = \frac{c_1^2+s_1^2}{-s_2c_1+s_1c_2} = \frac{1}{\sin(\alpha_1-\alpha_2)}
$$
if $s_1 \neq 0$ and $c_1 \neq 0$. One can check other cases similarly and get 
$$
\frac{\mathrm{Res}(m_{\alpha_1,\beta},z=a_n)}{\mathrm{Res}(m_{\alpha_1,\alpha_2,\beta},z=a_n)} = -\frac{c_1}{s_2} \qquad \text{and} \qquad \frac{\mathrm{Res}(m_{\alpha_1,\beta},z=a_n)}{\mathrm{Res}(m_{\alpha_1,\alpha_2,\beta},z=a_n)} = -\frac{s_1}{c_2}
$$
for the cases $s_1 = 0$ and $c_1 = 0$ respectively. In all three cases $\mathrm{Res}(m_{\alpha_1,\beta},z=a_n)$ is given in terms of $\mathrm{Res}(m_{\alpha_1,\alpha_2,\beta},z=a_n)$, $\alpha_1$ and $\alpha_2$ for any $n$. Finally recalling that the residue of a MHF at a pole is $-1/\pi$ times the corresponding point mass by Herglotz representation \eqref{Schwarzintrep}, we get unique determination of $\mu_{\alpha_1,\alpha_2,\beta}$ from $\mu_{\alpha_1,\beta}$, $\alpha_1$ and $\alpha_2$. The point mass at infinity can be handled similarly by comparing residues of $m_{\alpha_1,\alpha_2,\beta}(1/z)$ and $m_{\alpha_1,\beta}(1/z)$ at 0. 

Now, by the identity $m_{\alpha_1,\alpha_2,\beta}(0) = R_{\alpha_1,\alpha_2}\cot(\beta)$ and Theorem \ref{uniqres1}, the spectral measure $\mu_{\alpha_1,\alpha_2,\beta}$ and boundary conditions $\alpha_1$, $\alpha_2$ and $\beta$ uniquely determine the inner function $\Theta_{\alpha_1,\alpha_2,\beta}$ and hence the m-function $m_{\alpha_1,\alpha_2,\beta}$. We know that $m_{\alpha_1,\beta} = R_{\alpha_1}R^{-1}_{\alpha_1,\alpha_2}m_{\alpha_1,\alpha_2,\beta}$, so $m_{\alpha_1,\beta}$ is uniquely determined. Then we can follow the same steps we used in the proof of Theorem \ref{invspecprb2}, starting at the unique determination of $m_{\alpha,\beta}$ step, and get the desired result.
\end{proof}

\begin{proof}[\normalfont \textbf{Proof of Theorem~\ref{invspecprb3}}]
By Theorem \ref{uniqres4} and the arguments we used at the beginning of the proof of Theorem \ref{invspecprb1}, the given spectral data uniquely determine the MIF $\Theta_{\alpha_1,\alpha_2,\beta}$ and hence the MHF $m_{\alpha_1,\alpha_2,\beta}$. Then we can follow the same steps we used in the proof of Theorem \ref{invspecprb1} and get the desired result.
\end{proof}

\begin{proof}[\normalfont \textbf{Proof of Theorem~\ref{invspecprb4}}]
By Theorem \ref{uniqres5} and the arguments we used at the beginning of the proof of Theorem \ref{invspecprb2}, the given spectral data uniquely determine the MIF $\Theta_{\alpha_1,\alpha_2,\beta}$ and hence the MHF $m_{\alpha_1,\alpha_2,\beta}$. Then we can follow the same steps we used in the proof of Theorem \ref{invspecprb1} and get the desired result.
\end{proof}

\section{Acknowledgments}
Part of this work was conducted at Georgia Institute of Technology, where the author was a postdoc of Svetlana Jitomirskaya. The author thanks funding from NSF DMS-2052899, DMS-2155211, and Simons 681675.

\bibliographystyle{abbrv}
\bibliography{references}

\end{document}